\documentclass[12pt]{amsart}
\usepackage{snowshovelingalg} 

\SetKwInOut{Parameter}{Parameters}

\newcommand\numberthis{\addtocounter{equation}{1}\tag{\theequation}}

\let\oldnl\nl
\newcommand{\nonl}{\renewcommand{\nl}{\let\nl\oldnl}}

\begin{document}

\title[Computational Optimal Transport with General Storage Fees]{ Computational Semi-Discrete Optimal Transport with General Storage Fees}

\author{Mohit Bansil}

\begin{abstract}
	
We propose and analyze a modified damped Newton algorithm to solve the semi-discrete optimal transport with storage fees. We prove global linear convergence for a wide range of storage fee functions, the main assumption being that each warehouse's storage costs are independent. We show that if $F$ is an arbitrary storage fee function that satisfies this independence condition then $F$ can be perturbed into a new storage fee function so that our algorithm converges. We also show that the optimizers are stable under these perturbations. Furthermore, our results come with quantitative rates. 
	
\end{abstract}
 
\maketitle

\tableofcontents

\section{Introduction}\label{section: intro}

\subsection{Semi-discrete optimal transport with storage fees} 
In this paper we present an algorithm to compute numerical solutions to the semi-discrete optimal transport problem with storage fees. This problem can be described as follows. Let $X\subset \R^n$, $n\geq 2$ be compact and $Y:=\{y_i\}_{i=1}^N \subset \R^n$ a fixed collection of finite points, along with a \emph{cost function} $c: X\times Y\to \R$ and a \emph{storage fee function} $F: \R^N\to\R$. We also fix a Borel probability measure $\mu$ with $\spt \mu\subset X$, and assume $\mu$ is absolutely continuous with respect to Lebesgue measure.

We want to find a pair $(T, \weightvect)$ with $\weightvect=(\weightvect^1,\ldots, \weightvect^N)\in \R^N$ and $T: X\to Y$ measurable satisfying
\begin{align*}
T_\#\mu = \sum_{i=1}^N \weightvect^i \delta_{y_i}
\end{align*}
such that
\begin{align}\label{eqn: monge ver}
\int_X c(x, T(x)) d\mu + F(\weightvect) = \min_{\tilde \weightvect\in \R^N,\ \tilde{T}_\#\mu = \sum_{i=1}^N \tilde\weightvect^i \delta_{y_i}} \int_X c(x, \tilde{T}(x)) d\mu + F(\tilde\weightvect).
\end{align}
We remark that by taking $F$ to be the indicator function of a point, we cover the classical semi-discrete optimal transport problem where we are given fixed probability measures $\mu, \nu$ (with $\mu$ absolutely continuous and $\nu$ discrete) and we want to find a measurable map $T$ so that $T_\#\mu = \nu$ and 
\begin{align}\label{eqn: monge classical}
\int_X c(x, T(x)) d\mu = \min_{\tilde{T}_\#\mu = \nu} \int_X c(x, \tilde{T}(x)) d\mu.
\end{align}
This problem was first studied in \cite{Crippa2009} in the case where $F(\wv) = \sum_i h_i(\wv^i)$ for some $h_i$, a condition we refer to as ``the storage fee function splitting''. This condition can be though of as the storage costs between different warehouses being independent. In that setting the authors showed existence and uniqueness under some regularity and convexity conditions and gave a characterization of the optimizer. The problem with non-splitting storage fees in analyzed in \cite{BansilKitagawa19a} where the authors found a dual problem with strong duality:
\begin{align*}
\min_{\tilde \weightvect\in \R^N,\ \tilde{T}_\#\mu = \sum_{i=1}^N \tilde\weightvect^i \delta_{y_i}} \int_X c(x, \tilde{T}(x)) d\mu + F(\tilde\weightvect)
=\sup_{\psi \in \R^N} \int_X \min_{i} c(x,y_i) + \psi^i \dmu - F^*(\psi).
\end{align*}
Furthermore, it is shown that given a dual maximizer, $\psi$, the minimizing transport map $T$ can be constructed by sending each point of $X$ to the warehouse of its corresponding Laguerre cell of the Laguerre partition generated by $\psi$. The minimizing $\wv$ is then seen to be given by $\wv^i = T_\#\mu(\{y_i\})$. 

\subsection{Informal Overview of Results}

In this paper we propose a modified damped Newton method to solve the dual problem and hence construct approximate solutions to the primal problem. We will show global linear convergence and local superlinear convergence along with a quantitative rate. 

We will require two assumptions on the storage fee function $F$. The first major assumption is a technical condition that will be satisfied whenever the storage fee function splits, i.e. $F(\wv) = \sum_i f_i(\wv^i)$ for some $f_i: \R \to \R$. The second major assumption is that $F^*$ needs satisfy a regularity, strong convexity, and non-degeneracy assumption. 

In order to show that this second assumption is not too constraining, we give a method to perturb any convex $F$ that splits into one that satisfies the assumption. We are then concerned with how this perturbation will effect the associated optimal transport map. To address this, we prove a stability result on the optimizers under perturbations of $F$ which may be interesting on its own. With this our algorithm can find approximate solutions for the semi-discrete optimal transport with storage fees for any convex $F$ that splits, in particular for all of the storage fee functions analyzed in \cite{Crippa2009}. 

The major difficultly obtaining convergence is that the functional in the dual problem is only well-conditioned when all of the Laguerre cells have positive mass. Unfortunately, if the initial guess is not very close to the actual solution, a Newton step might end up collapsing a Laguerre cell. In other works on computational semi-discrete optimal transport (such as \cite{Merigot2018, KitagawaMerigotThibert19}) this was addressed by damping the Newton steps. In the classical case, it turns out that if the cells of an approximate solution are already too small then any sufficiently small step will not make them smaller. Unfortunately in the setting with storage fees this is not true and so if we only use damping to keep the cells from collapsing we will not get global linear convergence. 

To remedy this we introduce a sub-routine that we call parameter shuffling. Given any approximate solution in which some Laguerre cells are too small, this routine finds another approximate solution in which the sizes of all of the Laguerre cells are bounded from below and for which the error is not larger then the initial error. Our algorithm functions by alternating between Newton steps which reduce the error but might make some Laguerre cells too small and parameter shuffling steps which fix the Laguerre cells and do not increase the error. 

We remark that our parameter shuffling routine might be of independent interest in that it provides a means to find good initial guesses for any semi-discrete optimal transport algorithm that requires non-empty Laguerre cells (for example the algorithms of \cite{KitagawaMerigotThibert19, Merigot2018, BansilKitagawa20b}). 

\subsection{Literature Review}

The semi-discrete optimal transport problem with storage fees was first posed in \cite{Crippa2009}. In this paper they consider the case where the storage function splits and under some regularity and convexity assumptions they prove that there is a unique optimizer and give a characterization of it. The problem is analyzed in greater generality in \cite{BansilKitagawa19a} where the authors also provide a dual problem with strong duality. 

The use of Newton-type algorithms in the semi-discrete setting seems to first appear in \cite{OlikerPrussner88}. Here the authors prove local convergence of a Newton algorithm to solve a semi-discrete Monge-Amp\`ere equation with Dirichlet boundary conditions. Global convergence is established in \cite{Mirebeau15} although without any quantitative rates. 

Concerning the classical semi-discrete optimal transport problem, experimentally fast algorithms are presented in \cite{Merigot11, Levy15}  however they do not come with a convergence guarantee. A damped Newton algorithm with a proven quantitative rate of convergence was developed in \cite{KitagawaMerigotThibert19}. This idea is extended in \cite{Merigot2018} to solve the optimal transport problem in the case where the source is supported on a union of simplices and the target is discrete. An overview of numerics for the semi-discrete optimal transport problem is given in \cite[Section 6.4.2]{Santambrogio15}. 

Concerning numerics for the semi-discrete optimal transport problem with storage fees to the best of the author's knowledge the only previous algorithm is that of \cite{BansilKitagawa20b}. This paper only treats a very specific case of storage functions and not only the proof of convergence but also the Newton algorithm itself heavily depends on the specific choice of storage fee function. In particular the algorithm does not actually directly maximize the dual problem by searching for the zero of its gradient (indeed the mapping for which a zero is found is not the gradient of any scalar function, see \cite[Remark 2.7]{BansilKitagawa20b}) and so our algorithm is fundamentally different from the one presented there. Concerning convergence, that algorithm does have global linear and local superlinear convergence of the same order that ours does.  

\section{Setup}\label{section: setup}

\subsection{Notations and Conventions}
In this subsection we collect some notations and conventions that will be used throughout the entire paper. We fix positive integers $N$ and $n$ and a collection $Y:=\{y_i\}_{i=1}^N\subset \R^n$. For any vector $V \in \R^k$, we will write its components as superscripts so $V^i$ is the $i$-th component of $V$. We reserve the notation $\onevect$ to refer to the vector in $\R^N$ whose components are all $1$. We use $\norm{V}_p$ for the $l^p$ Euclidean norm, i.e. $\norm{V}_p = \sum_{i=1}^k \abs{V^i}^p$. We will use $\norm{V}$ to refer to the standard Euclidean norm, $\norm{V}_2$. 
We shall use that notation
\begin{align*}
\weightvectset:=\{{\weightvect}\in \R^N\mid \sum_{i=1}^N\weightvect^i=1,\ \weightvect^i\geq 0\},
\end{align*}
for the set of admissible weight vectors.

For any subsets $A, B \subset \R^k$ we shall use $d_\H(A,B)$ to denote the Hausdorff distance between $A$ and $B$. We adopt the notation $\delta_A$ to denote the indicator function of $A$ i.e.
\begin{align*}
\delta_A(x) =
\begin{cases}
0, & \text{ if } x \in A \\
+\infty & \text {else}
\end{cases}.
\end{align*}
We shall assume that the cost function $c$ satisfies the following standard conditions:
\begin{align}
c(\cdot, y_i)&\in C^2(X), \forall i\in \{1, \ldots, N\},\label{Reg}\tag{Reg}\\
\nabla_xc(x, y_i)&\neq \nabla_xc(x, y_k),\ \forall x\in X,\ i\neq k.\label{Twist}\tag{Twist}
\end{align}
We also assume the following condition, originally studied by Loeper in \cite{Loeper09}.
\begin{defin}
	The cost function, $c$, is said to satisfy \emph{Loeper's condition} if for each $i\in \{1, \ldots, N\}$ there exists a convex set $Y_i\subset \R^n$ and a $C^2$ diffeomorphism $\cExp{i}{\cdot}: Y_i\to X$ such that 
	\begin{align*}
	\forall\ t\in\R,\ 1\leq k, i\leq N,\ \{p\in Y_i\mid -c(\cExp{i}{p}, y_k)+c(\cExp{i}{p}, y_i)\leq t\}\text{ is convex}.\label{QC}\tag{QC}
	\end{align*}
	See Remark \ref{rmk: brenier solutions} below for further discussion of these conditions.
	
	We also say that a set $X\subset \R^n$ is \emph{$c$-convex} with respect to $Y$ if $\invcExp{i}{X}$ is a convex set for every $i\in \{1, \ldots, N\}$. 
\end{defin}

\begin{defin}
	For any $\psi\in \R^N$ and $i\in \{1, \ldots, N\}$, we define the \emph{$i$th Laguerre cell} associated to $\psi$ as the set
	\begin{align*}
	\Lag_i(\psi):=\{x\in X\mid c(x, y_i)+\psi^i= \min_{i} c(x,y_i) + \psi^i \}.
	\end{align*}
	We also define the function $G: \R^n\to \weightvectset$ by
	\begin{align*}
	G(\psi):=(G^1(\psi), \ldots, G^N(\psi))=(\mu(\Lag_1(\psi)), \ldots, \mu(\Lag_N(\psi))),
	\end{align*}
	and denote for any $\epsilon\geq 0$,
	\begin{align*}
	\mathcal{K}^\epsilon:=\{\psi\in \R^N\mid G^i(\psi)> \epsilon,\ \forall i\in \{1, \ldots, N\}\}.
	\end{align*}
\end{defin}

\begin{rmk}\label{rmk: brenier solutions}
Of the above conditions on the cost, \eqref{Reg} and \eqref{Twist} are standard conditions in the existence theory for optimal transport. Furthermore, \eqref{QC} holds if $Y$ is a finite set sampled from from a continuous space, and $c$ is a $C^4$ cost function satisfying what is known as the \emph{Ma-Trudinger-Wang} condition (first introduced in a strong form in \cite{MaTrudingerWang05}, and in \cite{TrudingerWang09} in a weaker form).
	
	If $\mu$ is absolutely continuous with respect to Lebesgue measure, then the condition \eqref{Twist} implies that the Laguerre cells are pairwise $\mu$-almost disjoint. In this case the generalized Brenier's theorem \cite[Theorem 10.28]{Villani09}, tells us that for any $\psi\in \R^N$, the map $T_\psi: X\to Y$ defined by $T_\psi(x)=y_i$ whenever $x\in \Lag_i(\psi)$ is a minimizer in the optimal transport problem \eqref{eqn: monge classical}, where the source measure is $\mu$ and the target measure is defined by $\nu(\{y_i\})=G(\psi)^i$.
\end{rmk}

For the remainder of this paper we will also assume that $X$ is compact and $c$-convex with respect to $Y$. Furthermore we denote the density of the (absolutely continuous) measure $\mu$ by $\rho$ and we assume that $\rho$ is $\alpha$-H\"older continuous for some $\alpha \in (0, 1]$.

Next we set some definitions concerning the function $F$, representing the cost of warehouse storage.

\begin{defin}
A storage fee function, $F$, is a proper closed convex function from $\R^n \to \R \cup \{+\infty\}$ that is $+\infty$ outside of $\wvs$ (in other words $\dom F \subset \wvs$, where $\dom F$ means the effective domain of a convex function).
\end{defin}

 These are the sufficient conditions for strong duality given in \cite{BansilKitagawa19a}.  Throughout this note $F$ will always be a storage fee function unless otherwise noted. 

\begin{defin}
	A storage fee function $F$ is said to split if there are $f_i: \R \to \R$ so that 
	\begin{align*}
	F(\wv) = \sum_i f_i(\wv^i) + \delta_\wvs(\wv)
	\end{align*}
	where each $f_i$ is convex, closed, and proper. Furthermore we require that $\dom f_i \subset [0,1]$ (if this is not true we can replace $f_i$ with $f_i + \delta_{[0,1]}$). 
\end{defin}

We remark that the condition that $\dom F \subset \wvs$ (and so the addition of the $\delta_\wvs(\wv)$ term) does not add any additional assumption. Since in the optimal transport problem with storage fees we minimize over pairs $(T, \wv)$ satisfying ${T}_\#\mu = \sum_{i=1}^N \weightvect^i \delta_{y_i}$, we must have that $\wv \in \wvs$ since $\mu$ was a probability measure. In particular solving the optimal transport problem with storage fee function $F + \delta_\wvs$ will yield the exact same solution as solving it with storage fee function $F$. 

For our algorithm to converge we will need some kind of strong convexity of the objective functional. There are two ways that we can get this. Either we can assume that $F^*$ has some kind of strong convexity which corresponds to some kind of regularity of $F$. Alternatively, we can require that the support of $\mu$ satisfies some kind of quantitative connectedness assumption. It turns out that for our purposes it suffices to assume that $\mu$ satisfies a Poincar\'e-Wirtinger inequality. 

\begin{defin}
	A probability measure $\mu$ on $X$ satisfies a \emph{Poincar\'e-Wirtinger inequality}  if there is a constant $\Cpw>0$ such that for any $f\in C^1(X)$,
	\begin{align*}
	\norm{f-\int_Xfd\mu}_{L^1(\mu)}\leq \Cpw \norm{\nabla f}_{L^1(\mu)}.
	\end{align*}
	If this holds we will say that ``$\mu$ satisfies a \emph{PW inequality}''.
\end{defin}

We remark that if the support of $\mu$ is connected and $\rho$ is bounded away from $0$ on support of $\mu$, then it is classical that $\mu$ satisfies a PW inequality. 

Finally we call the objective functional to be maximized $\Phi$ defined as
\begin{align*}
\Phi(\psi) 
:= \int_X \min_{i} c(x,y_i) + \psi^i \dmu - F^*(\psi)
= \sum_{i} \int_{\Lag_i(\psi)} c(x, y_i) + \psi^i \dmu - F^*(\psi).
\end{align*}
We recall that $\nabla \Phi = G - \nabla F^*$, see for example \cite[Section 6.4.2]{Santambrogio15}. The conditions \eqref{Reg}, \eqref{Twist}, \eqref{QC} are sufficient to obtain the $C^{1,\alpha}$ regularity of $G$ and a $PW$ inequality is sufficient to obtain strong monotonicity of $G$ outside of the direction $\onevect$ as long as $\psi \in \mathcal{K}^\eps$ for some $\eps > 0$ (see \cite[Theorems 4.1, 5.1]{KitagawaMerigotThibert19}).

\subsection{Statement of Main Results}
We are now ready to state our algorithm and main results. We start by describing our parameter shuffling routine. 

\begin{algorithm}
	\DontPrintSemicolon
	
	\KwIn{A tolerance $\eps \in (0, \frac{1}{3N})$ and an initial $\psi_{in} \in \R^N$.}
	
	Set $\psi := \psi_{in}$\;
	
	\While{$\min_j G^j(\psi) \leq \eps$}
	{
		\For{$i \in \{1, \dots, N \}$}
		{
			\If{$G^i(\psi) \leq \eps$ \label{line: MF if}} 
			{			
				Find $r > 0$ so that $G^i(\psi - r e^i ) \in [2\eps, 3\eps]$ where $e^i$ is the $i$-th standard coordinate. \label{line: MJ find}\;
				Set $\psi = \psi - r e^i$ \label{line: MJ increment} \;
			}
		}
	}
	
	\Return{$\psi_{out} := \psi$}
	
	\caption{Parameter Shuffling Routine}
	\label{alg: magic juggle}
\end{algorithm}

We remark that since $G$ is a monotone function (it is the gradient of a concave function) we can always use a binary search to find the $r$ needed in line \ref{line: MJ find}. Next we describe our modified damped Newton algorithm.

\begin{algorithm}[H]
	
	\DontPrintSemicolon
	\LinesNumberedHidden 
	
	\KwIn{A tolerance $\zeta > 0$, an initial $\psi_0\in
		\R^N$, and an $\eps > 0$ such that $\nabla F^* \geq \eps$ coordinate-wise on $\mathcal{K}^0$. Set $\eps_0 = \frac{\eps}{4}$. } 
	
	\While{$\norm{\nabla  \Phi(\psi_k)}  \geq \zeta$}
	{
		\begin{description}
			
			\item [Step 1] Run the Parameter Shuffling Routine on $\psi_{k}$ with tolerance parameter $2\eps_0$. 
			
			\item[Step 2] Compute $\vec{d}_k = - [D^2  \Phi(\psi_k)]^{-1} (\nabla  \Phi(\psi_k))$
			\item [Step 3] Determine the minimum $\ell \in \N$ such that $\psi_{k+1, \ell} :=	\psi_k + 2^{-\ell} \vec{d}_k $ satisfies
			\begin{equation*}
			\left\{
			\begin{aligned}
			&\min_i G^i(\psi_{k+1, \ell}) \geq \eps_0 \\
			&\norm{\nabla  \Phi(\psi_{k+1, \ell})}_1 \leq (1-2^{-(\ell+1)}) \norm{\nabla  \Phi(\psi_k)}_1
			\end{aligned}
			\right.
			\end{equation*}
			\item [Step 4] Set $\psi_{k+1} = \psi_k + 2^{-\ell}  \vec{d}_k$ and $k\gets k+1$.

		\end{description}	
	}	
	\caption{Damped Newton's algorithm}
	
	\label{alg: damped newton}
\end{algorithm}

We remark that $D^2  \Phi(\psi_k) = DG - D^2F^*$. The matrix $DG(\psi)$ can be explicitly computed in terms of the Laguerre diagram associated to $\psi$ (see the discussion before Lemma 6.5 in \cite{Santambrogio15}) and under certain assumptions we will explicitly compute $D^2F^*$ in terms of $F$ (see the proof of Theorem \ref{thm: smoothness of F*}). 

Also note that Algorithm \ref{alg: damped newton} will always keep the sizes of the cells bigger than $\frac{\eps}{4}$. The condition that $\nabla F^* \geq \eps$ coordinate-wise on $\mathcal{K}^0$ insures that the cells in the true optimal solution are bigger than ${\eps}$ and so that algorithm can converge. We show in Theorem \ref{thm: regularize} that this assumption on $\nabla F^*$ is not too restrictive.  

Our first main theorem is that under certain conditions on $F$, Algorithm \ref{alg: damped newton} has has global linear convergence and local superlinear convergence. In particular under these assumptions we have that $\nabla F^* \geq \eps$ coordinate-wise. 

\begin{thm}\label{thm: Newton Convergence}
	
Suppose $F$ is a storage fee function that splits into $f_i$ so that the $f_i$ are essentially smooth and twice continuously differentiable on their domains, the $f_i''$ are locally Lipschitz on their domains, and each $f_i$ is strongly convex. Furthermore if $a_i, b_i$ are such that $ \conj {\dom f_i} = [a_i, b_i]$ we assume that there is some $\eps > 0$ so that $a_i > \eps$ and that $\sum_i a_i < 1 < \sum_i b_i$. 
Then Algorithm \ref{alg: damped newton} has global linear convergence and local superlinear convergence of order $\alpha$, the H\"older constant of $\rho$. 
\end{thm}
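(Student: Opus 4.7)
The plan is to reduce the problem to a standard damped-Newton convergence argument on a region where $\Phi$ is strongly concave with Hölder Hessian, and to isolate the analysis of the parameter-shuffling sub-routine as the essentially new ingredient. I will split the proof into three stages.

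First, I would convert the hypotheses on $F$ into quantitative hypotheses on $\Phi$. Under the stated assumptions (each $f_i$ essentially smooth, twice continuously differentiable on its domain with locally Lipschitz $f_i''$, and strongly convex), the Legendre--Fenchel duality applied coordinate-wise gives that each $f_i^*$ is $C^2$ with locally Lipschitz $(f_i^*)''$, and $(f_i^*)'$ takes values in $\conj{\dom f_i} = [a_i, b_i]$; the assumption $\sum a_i < 1 < \sum b_i$ ensures that the constraint $\sum_i \wv^i = 1$ in $\wvs$ is active non-trivially so that the conjugate of $F = \sum f_i + \delta_\wvs$ admits the explicit computation referred to around Theorem \ref{thm: smoothness of F*}. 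Combined with $a_i > \eps$ this yields $\nabla F^* \geq \eps$ coordinate-wise on all of $\mathcal{K}^0$, so the input requirement of Algorithm \ref{alg: damped newton} is met. By the hypotheses on $c$ and $\mu$, Theorems 4.1 and 5.1 of \cite{KitagawaMerigotThibert19} give that $G$ is $C^{1,\alpha}$ and strongly monotone on $\mathcal{K}^{\eps_0}$ transverse to $\onevect$. Since $D^2F^*$ is positive definite and strictly monotone in the $\onevect$-direction by strong convexity of the $f_i$, it follows that $-D^2\Phi = -DG + D^2 F^*$ is positive definite with a quantitative bound on $\mathcal{K}^{\eps_0}$ and has $\alpha$-Hölder continuity there.

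Second, I would analyze the parameter shuffling routine. The key claims to establish are: (a) the inner \texttt{for} loop terminates after at most $N$ iterations of the \texttt{while} loop, (b) on exit $G^i(\psi_{out}) \geq 2\eps_0$ for every $i$, and (c) $\|\nabla\Phi(\psi_{out})\|_1 \leq \|\nabla\Phi(\psi_{in})\|_1$. Claim (a) follows because decreasing a single coordinate $\psi^i$ by the $r>0$ chosen on line \ref{line: MJ find} strictly increases $G^i$ (to $[2\eps,3\eps]$) while by monotonicity only weakly decreasing the other $G^k$'s; the total mass conservation $\sum_k G^k(\psi)=1$ plus the tolerance choice $\eps<\tfrac{1}{3N}$ leave room to carry this out. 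Claim (c) is the delicate point: when $G^i(\psi)\leq \eps$, the identity $\nabla\Phi^i = G^i - (\nabla F^*)^i \leq \eps - \eps = 0$ and $\nabla F^*\geq \eps$ show the $i$-th coordinate is in the ``negative'' block of $\nabla\Phi$; decreasing $\psi^i$ moves $G^i$ upward, reducing $|\nabla\Phi^i|$, while redistributing mass to the other coordinates whose $\nabla\Phi^k$ values then shift in a compensating direction. Because $\sum_k \nabla\Phi^k = 1 - \sum_k (\nabla F^*)^k$ is invariant under any change of $\psi$, a sign-tracking argument on the positive and negative parts of $\nabla\Phi$ yields non-increase of the $l^1$ norm.

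Third, with these properties in hand, the damped Newton convergence reduces to the framework of \cite{KitagawaMerigotThibert19}. After Step 1 the iterate lies in $\mathcal{K}^{2\eps_0}$, so the Newton direction $\vec d_k$ in Step 2 is well-defined and satisfies $\|\vec d_k\|\leq C\|\nabla\Phi(\psi_k)\|$ for a constant depending on the strong monotonicity bound. Using the Hölder-Hessian bound on $\mathcal{K}^{\eps_0}$ together with the $l^1$ Taylor expansion, the line search in Step 3 terminates with $\ell \leq \ell_{\max}$ for a uniform $\ell_{\max}$; this produces a contraction factor $1-2^{-(\ell_{\max}+1)}<1$, hence global linear convergence. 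Once $\|\nabla\Phi(\psi_k)\|$ is small enough, the full step $\ell=0$ satisfies both conditions in Step 3 (the first by continuity of $G$, the second by the standard Newton--Kantorovich estimate using $\alpha$-Hölder $D^2\Phi$), yielding the local bound $\|\nabla\Phi(\psi_{k+1})\|\leq C\|\nabla\Phi(\psi_k)\|^{1+\alpha}$, i.e.\ superlinear convergence of order $\alpha$.

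The main obstacle is step two, specifically the simultaneous verification of claims (a) and (c) for parameter shuffling: the shuffling may toggle between coordinates many times, and one must certify both termination and monotonicity of $\|\nabla\Phi\|_1$ across the entire sequence of coordinate updates, not merely a single step. I expect the cleanest route is a potential-function argument based on $\sum_i (\nabla\Phi^i)_- = \sum_i \max(0,(\nabla F^*)^i - G^i)$, leveraging the fact that mass conservation ties changes in the negative and positive parts together, so that a decrease in one controlled block forces a matching decrease in the full $l^1$ norm.
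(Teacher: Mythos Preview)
Your three-stage architecture matches the paper's, and the damped-Newton portion (stage three) is essentially the argument of Proposition~\ref{prop: Newton Convergence}. But there is a genuine gap in stage two, claim~(c), and a couple of smaller misstatements elsewhere.

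\textbf{The gap in the shuffling error bound.} Your proposed potential-function argument for $\|\nabla\Phi\|_1$ non-increase rests only on (i) $\nabla F^*\ge\eps$ coordinate-wise, (ii) monotonicity of $G$, and (iii) mass conservation $\sum_j G^j=\sum_j(\nabla F^*)^j=1$. These are not enough. When you decrease $\psi^i$, monotonicity controls the sign of the change in $G$ and of the $i$th component of $\nabla F^*$, but it says nothing about the sign of the change in $(\nabla F^*)^j$ for $j\neq i$. If some $(\nabla F^*)^j$ were to jump upward by more than $G^j$ drops, a coordinate with $\nabla\Phi^j>0$ could flip to a large negative value and $\|\nabla\Phi\|_1$ would increase; mass conservation only forces the \emph{sum} of the off-diagonal changes to balance the diagonal one, not each individually. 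The paper closes this hole by exploiting the splitting hypothesis on $F$ in an essential way: Proposition~\ref{prop: splitting} shows that when $F$ splits, decreasing $\psi^i$ can only \emph{increase} each $(\nabla F^*)^j$ for $j\neq i$, and Corollary~\ref{cor: splitting} upgrades this to the diagonal-dominance identity $\partial^2_{ii}F^*=\sum_{j\neq i}|\partial^2_{ij}F^*|$. That identity is exactly the extra structural hypothesis fed into Proposition~\ref{prop: magic juggle error} to make the $\ell^1$ bookkeeping close. Your outline never invokes the splitting of $F$ in stage two, so the argument as written cannot succeed.

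\textbf{Smaller issues.} First, $D^2F^*$ is not positive definite in the $\onevect$-direction; on the contrary $\onevect\in\ker D^2F^*(\psi)$ for every $\psi$ (since $\nabla F^*$ takes values in the hyperplane $\wvs$), and the paper extracts strong concavity of $\Phi$ transverse to $\onevect$ from Corollary~\ref{cor: splitting invertibility} plus a compactness argument via Lemma~\ref{lem: psi bound}. No PW inequality is assumed in Theorem~\ref{thm: Newton Convergence}, so the strong-monotonicity-of-$G$ route you cite from \cite{KitagawaMerigotThibert19} is unavailable here. Second, your termination bound for the shuffling routine (``at most $N$ iterations of the \texttt{while} loop'') is too optimistic: fixing index $j$ can push $G^i$ back below the tolerance for an already-fixed index $i$, and the paper's bound is $(N-1)(4L\|c\|_\infty/\eps+1)$ executions of line~\ref{line: MJ increment}, obtained by bounding how far any single $\psi^k$ can drift before a non-touched cell would collapse.
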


For our next theorem we show that the assumptions imposed in our main convergence theorem are not too restrictive in the sense that for any storage fee function that splits, there is an approximating storage fee function that satisfies the assumptions. 

\begin{thm}	\label{thm: regularize}
	Let $F$ be a storage fee function that splits into $f_i$. Then for every $\eta > 0$ there is a storage fee function $\ti F$ (explicitly constructed in the proof) so that $\ti F$ satisfies the assumptions of Theorem \ref{thm: Newton Convergence} and if $\wv, \ti \wv$ are the minimizers of problems associated to $F, \ti F$ respectively then $\norm{\wv - \ti \wv} \leq \eta$.
\end{thm}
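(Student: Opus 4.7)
The strategy is to construct $\tilde F$ explicitly by regularizing each splitting component $f_i$ so that every hypothesis of Theorem \ref{thm: Newton Convergence} is forced to hold, and then to invoke the stability result for minimizers under perturbations of the storage fee function announced in the introduction.

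For the construction, fix small parameters $\lambda, \sigma, \kappa, \tau > 0$, all of which will tend to zero at the end. For each $i$, set $\tilde a_i := \lambda$ and choose $\tilde b_i$ slightly below $\sup \dom f_i$ but large enough that $\sum_j \tilde b_j > 1$; such $\tilde b_i$ exist because $\dom F \cap \wvs \neq \emptyset$ forces $\sum_j \sup \dom f_j \geq 1$, with any equality handled by a harmless extension. Let $\bar f_i$ be a convex Lipschitz extension of $f_i$ to $\R$ that agrees with $f_i$ on $[\tilde a_i + \sigma, \tilde b_i - \sigma]$, let $\varphi_\sigma$ be a standard smooth nonnegative mollifier of width $\sigma$, and let $c_i$ denote the midpoint of $[\tilde a_i, \tilde b_i]$. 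Define
\begin{align*}
\tilde f_i(w) := (\bar f_i \star \varphi_\sigma)(w) + \tau (w - c_i)^2 - \kappa \log(w - \tilde a_i) - \kappa \log(\tilde b_i - w)
\end{align*}
on $(\tilde a_i, \tilde b_i)$ and $+\infty$ elsewhere, and set $\tilde F := \sum_i \tilde f_i + \delta_\wvs$.

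Next I would verify each hypothesis of Theorem \ref{thm: Newton Convergence}: the decomposition is the required splitting; essential smoothness is forced by the log barriers since $\tilde f_i'(w) \to \mp \infty$ as $w \to \tilde a_i^+$ or $w \to \tilde b_i^-$; the mollified term is $C^\infty$, the barriers are analytic on $(\tilde a_i, \tilde b_i)$, and the quadratic is trivial, so $\tilde f_i''$ is smooth and hence locally Lipschitz on the open domain; the quadratic yields strong convexity with constant $2\tau$; and the endpoint conditions $\tilde a_i > \eps := \lambda/2$ and $\sum_i \tilde a_i < 1 < \sum_i \tilde b_i$ hold by construction provided $\lambda < 1/(2N)$. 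For the stability claim $\norm{\wv - \tilde \wv} \leq \eta$, I would send $\lambda, \sigma, \kappa, \tau \to 0$ jointly: the resulting $\tilde F$ epi-converges to $F$ on $\wvs$ because the mollification and barrier terms vanish uniformly on compact subsets of the interior, while the quadratic and the domain truncation only affect boundary points where $F$ is either consistent or $+\infty$. The stability result from the introduction then gives $\tilde \wv \to \wv$, and choosing the parameters small enough delivers the claimed inequality.

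The main obstacle is the boundary case where the true minimizer $\wv$ has $\wv^i \in \{0, \sup \dom f_i\}$ for some $i$: since by construction $\tilde \wv^i \in [\tilde a_i, \tilde b_i]$ with $\tilde a_i = \lambda > 0$, a nonzero gap in those coordinates is forced. Controlling this gap requires choosing $\lambda \ll \eta/N$ and verifying that the stability theorem still applies when $\wv$ lies on the boundary of $\dom F$. Additional care is needed to pick $\sigma \ll \lambda$, so that $\bar f_i \star \varphi_\sigma$ does not distort $f_i$ on the relevant interior subinterval, and to balance the four parameters against each other so that all four sources of perturbation remain under the single tolerance $\eta$.
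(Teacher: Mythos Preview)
Your construction is reasonable in spirit but has a concrete gap in the stability step. The log barriers $-\kappa\log(w-\tilde a_i)-\kappa\log(\tilde b_i-w)$ blow up at the endpoints, so $\tilde f_i$ is unbounded on its domain and $\|\tilde f_i - f_i\|_{L^\infty}=\infty$ for every $\kappa>0$. Consequently the quantitative $L^\infty$ stability result (Proposition~\ref{prop: uniform stab}) never applies to the passage from the mollified function to the one with barriers. Your fallback is epi-convergence, but no stability statement in the paper is phrased that way, and ``epi-convergence implies convergence of minimizers with a rate'' is exactly the thing that would need proof; appealing to ``the stability result from the introduction'' does not supply it. You would either have to prove a new stability lemma adapted to epi-convergence, or replace the log barriers by something bounded.

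The paper takes the second route and the choice is instructive: instead of separate quadratic and log terms, it adds $-\eta\sqrt{(d_i-x)(x-c_i)}$. This single term is (i) bounded by $\tfrac{\eta}{2}(d_i-c_i)$ via AM--GM, so Proposition~\ref{prop: uniform stab} applies, (ii) strongly convex with parameter $\tfrac{2\eta}{d_i-c_i}$, and (iii) essentially smooth, since its derivative diverges at the endpoints. The rest of the paper's proof is a staged construction $F=F_1\to F_2\to\cdots\to F_6=\tilde F$ in which each step changes either the domain only (handled by the Hausdorff-distance stability, Proposition~\ref{prop: hausdorff stab}, together with Lemma~\ref{lem: hausdorff hypercube}) or the values only on a fixed domain (handled by Proposition~\ref{prop: uniform stab}); this separation is precisely what lets the two quantitative stability propositions be chained. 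Your one-shot construction mixes domain truncation, mollification, and barrier insertion simultaneously, which makes it hard to invoke either proposition cleanly. The boundary cases you flag ($\wv^i=0$ or $\sum_i a_i=1$) are also handled in the paper by explicit ad hoc modifications at the $F_3\to F_4$ stage rather than by a limiting argument.
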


We remark here that the results of \cite{BansilKitagawa20a} tell us that the optimal transport maps that solve the problems associated to $F$ and $\ti F$ are also close in the sense of $L^1(\mu)$ distance. 

\subsection{Outline of Paper}

In section \ref{sec: Parameter Shuffling} we show that Algorithm \ref{alg: magic juggle} terminates and does not increase error. In section \ref{sec: Convergence of Newton Algorithm} we prove global linear and local superlinear convergence of Algorithm \ref{alg: damped newton} under assumptions given in terms of $F^*$. In section \ref{sec: Relationship between F and F*} we translate the assumptions on $F^*$ back to assumptions on $F$ and use this to prove Theorem \ref{thm: Newton Convergence}, our main convergence theorem. In section \ref{sec: Stability} we obtain some results on the stability of the optimizing weight vector under perturbations in the storage fee function. In section \ref{sec: Regularizations} we show how to regularize any splitting storage fee function into one that satisfies the assumptions of Theorem \ref{thm: Newton Convergence}. This then gives a proof of Theorem \ref{thm: regularize}. We remark that in this section we also obtain an explicit formula for $D^2F^*$ that may be useful for implementing Algorithm \ref{alg: damped newton}. Finally in appendix \ref{sec: Appendix Bounds on psi's} we prove a quick lemma that bounds the difference between different coordinates of any $\psi \in \mathcal{K}^0$.

\section{Parameter Shuffling Analysis}\label{sec: Parameter Shuffling}

In this section we analyze Algorithm \ref{alg: magic juggle}. Our first proposition shows that it always terminates and gives a bound of how many iterations it can take. 

\begin{prop}
Algorithm \ref{alg: magic juggle} terminates in at most $(N-1) (\frac{4L\norm{c}_\infty}{\eps}+ 1)$ steps where $L$ is the Lipschitz constant of $G$. 
\end{prop}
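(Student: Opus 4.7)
The plan is to combine a per-shuffle lower bound on the decrement $r$ with a global upper bound on how far any single coordinate $\psi^i$ can drop. First, whenever coordinate $i$ is shuffled, $G^i$ moves from a value at most $\eps$ to a value in $[2\eps, 3\eps]$, a net change of at least $\eps$; since $G$ is Lipschitz with constant $L$, the decrement $r$ must satisfy $r \geq \eps/L$. This step uses nothing beyond Lipschitz continuity of $G$ and the explicit choice of interval in line \ref{line: MJ find}.

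Next I would bound the total cumulative decrease of each coordinate $\psi^i$. The key observation is that the conditions $G^i(\psi) > 0$ and $G^i(\psi) < 1$ each confine $\psi^i$ to within $2\|c\|_\infty$ of the other coordinates: if $G^i(\psi) > 0$ then picking any $x \in \Lag_i(\psi)$ yields $\psi^i \leq \psi^j + 2\|c\|_\infty$ for every $j$, while if $G^i(\psi) < 1$ then picking $x \notin \Lag_i(\psi)$ yields $\psi^i > \min_{j \neq i} \psi^j - 2\|c\|_\infty$; this is essentially the content of the appendix lemma. Applying the first inequality just after the first shuffle of $i$ (where $G^i \geq 2\eps > 0$) and the second just before each subsequent shuffle (where $G^i \leq \eps < 1$) bounds the decrement of $\psi^i$ across all of its shuffles by roughly $4\|c\|_\infty$, up to the drift of $\min_{j \neq i} \psi^j$. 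Combined with the $\eps/L$ lower bound per step this yields at most $\frac{4L\|c\|_\infty}{\eps} + 1$ shuffles of coordinate $i$.

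To obtain the $(N-1)$ factor rather than $N$, I would identify a coordinate $i^*$ which is never shuffled and sum the per-coordinate bound only over the remaining $N-1$ indices. The natural candidate is a coordinate of largest mass: since $\sum_j G^j = 1$, there is always some $i^*$ with $G^{i^*} \geq 1/N > 3\eps$, and the hypothesis $\eps < \frac{1}{3N}$ provides a buffer of $2/(3N) > 2\eps$ above the shuffle threshold. Because $\psi^{i^*}$ never changes, the quantity $\min_{j \neq i} \psi^j$ that arose in the previous step cannot drift below $\psi^{i^*}$, and the $4\|c\|_\infty$ bound on the total decrement of $\psi^i$ becomes fully rigorous.

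The main obstacle I anticipate is precisely the claim that some $i^*$ can be chosen so that $G^{i^*}$ never drops to $\eps$: a naive accounting via Lipschitzness allows each shuffle to drop $G^{i^*}$ by up to $3\eps$, so over many shuffles this bound is insufficient. I expect this to be handled either by a shift-invariant potential argument (perhaps tracking $\Phi(\psi) - \bar\psi$, which is monotone under shuffles and bounded in terms of $\|c\|_\infty$ on $\mathcal{K}^0$), by choosing $i^*$ adaptively after each outer while-loop pass, or by using that mass transfers only between cells adjacent in the Laguerre diagram, which prevents every shuffle from simultaneously eroding the same $G^{i^*}$.
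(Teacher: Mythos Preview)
Your overall architecture matches the paper's proof: a per-shuffle lower bound $r \geq \eps/L$ on the decrement, a total displacement bound of roughly $4\|c\|_\infty$ on any shuffled coordinate using the fixed unshuffled coordinate $j$ as an anchor via Lemma~\ref{lem: psi bound}, and the observation that only $N-1$ coordinates can ever be shuffled. Those pieces are all present in the paper and you have them right.

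The genuine gap is exactly the one you flagged yourself: you have not established that some coordinate is never shuffled, and none of the three mechanisms you propose (a potential argument, an adaptive choice of $i^*$, or Laguerre adjacency) is what the paper uses. The paper's argument is much simpler and rests on a monotonicity you overlooked. Whenever an index $m \neq k$ is shuffled, $\psi^m$ decreases; this enlarges $\Lag_m$ at the expense of the other cells, so $G^k$ can only \emph{decrease}. Consequently, once index $k$ has been shuffled for the first time (setting $G^k \in [2\eps,3\eps]$), we have $G^k \leq 3\eps$ for the entire remainder of the algorithm: between its own shuffles $G^k$ only drops, and each of its own shuffles resets it to at most $3\eps$. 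If every index were eventually shuffled, then at some moment every $G^i \leq 3\eps$, forcing $\sum_i G^i \leq 3N\eps < 1$, a contradiction. This is where the hypothesis $\eps < \frac{1}{3N}$ enters, and it immediately yields the never-shuffled index $j$ with $\psi^j$ constant throughout.

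Once you have this fixed $j$, your displacement argument goes through cleanly and coincides with the paper's: after one shuffle of $k$ you have $\psi^k < \psi^j + 2\|c\|_\infty$; if $k$ were shuffled $\frac{4L\|c\|_\infty}{\eps}$ more times you would reach $\psi^k < \psi^j - 2\|c\|_\infty$, forcing $G^j = 0$ by Lemma~\ref{lem: psi bound}, whence $j$ would be shuffled at its next check, contradicting $j \notin A$.
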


\begin{proof}

First of all we claim it is not possible for every $\psi^i$ to be increased. More rigorously let $A \subset \{1, \dots, N \}$ be the collection of incidences for which the ``if'' statement in line \ref{line: MF if} evaluates as true in some iteration. We claim that $A \neq \{1, \dots, N \}$. To see this note that if $k \in A$ then $G(\psi)^k \leq 3\eps$ throughout the entire algorithm after line \ref{line: MF if} evaluates as true for the index $k$. Since $\sum_i G(\psi)^i = 1 > 3 N \eps$ it is not possible for $A = \{1, \dots, N \}$. Hence we can fix some $j \not\in A$ and some $k \in A$. 

We claim that line \ref{line: MJ increment} can execute at most $\frac{4L\norm{c}_\infty}{\eps} + 1$ times for the index $k$. After just one iteration $G^k(\psi) > 0$ and so by Lemma \ref{lem: psi bound} we see that $\psi^k < \psi^j + 2{\norm{c}_\infty}$. 
 Now note that in each iteration of line \ref{line: MJ increment}, $G^k$ increases by at least $\eps$ and so $\psi^k$ decreases by at least $\frac{\eps}{L}$. Hence after iterating line \ref{line: MJ increment},  $\frac{4L\norm{c}_\infty}{\eps}$ more times we will have decreased $\psi^k$ by at least $4\norm{c}_\infty$, and so we will have $\psi^k < \psi^j - {2\norm{c}_\infty}$ which would give us $G^j(\psi) = 0$ by Lemma \ref{lem: psi bound} which is a contradiction since $j \not\in A$. 
 
Since line \ref{line: MJ increment} can be executed at most $\frac{4L\norm{c}_\infty}{\eps}+1$ times for each index in $A$, we conclude that it can in total only be executed at most $\frac{4L\norm{c}_\infty}{\eps} \abs{A} + 1 \leq (N-1) (\frac{4L\norm{c}_\infty}{\eps}+ 1)$ times in total and so the result follows. 
\end{proof}

Next we prove that Algorithm \ref{alg: magic juggle} cannot increase the error. 

\begin{prop}\label{prop: magic juggle error}
	
Let $\psi_{in}$ be the input and $\psi_{out}$ be the output of Algorithm \ref{alg: magic juggle}. Suppose that $F^*$ is differentiable, for all $k \in \{1, \dots, N \}$ we have ${(\nabla F^*)}^k \geq 3\eps$, and that 
\begin{align*}
\frac{\partial F^*}{{\partial \psi^k}}(\psi_1) - \frac{\partial F^*}{{\partial \psi^k}}(\psi_2) \geq \sum_{j \neq k} \abs{\frac{\partial F^*}{\partial \psi^j}(\psi_1) - \frac{\partial F^*}{\partial \psi^j}(\psi_2)},
\end{align*}
for all $k \in \{1, \dots, N \}$.
Then $\norm{\nabla \Phi(\psi_{out})}_1 \leq \norm{\nabla \Phi(\psi_{in})}_1$. 
	
\end{prop}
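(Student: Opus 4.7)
The plan is to reduce the proposition to a single-step claim: each individual execution of line \ref{line: MJ increment} does not increase $\norm{\nabla \Phi}_1$. Iterating over the finitely many such steps then gives the result. Write $\psi_1$ for the value of $\psi$ just before such a step and $\psi_2 = \psi_1 - r e^i$ for the value just after, and set $A^k := G^k(\psi_2) - G^k(\psi_1)$ and $B^k := (\nabla F^*)^k(\psi_2) - (\nabla F^*)^k(\psi_1)$, so that the $k$-th coordinate of $\nabla \Phi = G - \nabla F^*$ changes by $A^k - B^k$.

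The first ingredient is a geometric monotonicity observation: decreasing $\psi^i$ only shrinks $c(x, y_i) + \psi^i$ pointwise in $x$, hence $\Lag_i(\psi_1) \subset \Lag_i(\psi_2)$ up to $\mu$-null sets and $\Lag_j(\psi_2) \subset \Lag_j(\psi_1)$ for $j \neq i$. Thus $A^i \geq 0$, $A^j \leq 0$ for $j \neq i$, and the mass-conservation identity $\sum_k G^k \equiv 1$ gives the crucial telescoping equality $A^i = \sum_{j \neq i} |A^j|$. The second ingredient is a sign check on $(\nabla \Phi)^i$: combining $G^i(\psi_2) \leq 3\eps$ with $(\nabla F^*)^i \geq 3\eps$ yields $(\nabla \Phi)^i(\psi_2) \leq 0$, while $G^i(\psi_1) \leq \eps$ together with $(\nabla F^*)^i \geq 3\eps$ yields $(\nabla \Phi)^i(\psi_1) \leq -2\eps < 0$. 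These non-positivities allow the $k=i$ term to be expanded \emph{exactly} as $|(\nabla\Phi)^i(\psi_2)| - |(\nabla\Phi)^i(\psi_1)| = -A^i + B^i$, while for $j \neq i$ the ordinary triangle inequality gives $|(\nabla \Phi)^j(\psi_2)| - |(\nabla \Phi)^j(\psi_1)| \leq |A^j| + |B^j|$. Summing and applying $A^i = \sum_{j\neq i}|A^j|$ collapses all $A$-contributions and leaves
\begin{align*}
\norm{\nabla \Phi(\psi_2)}_1 - \norm{\nabla \Phi(\psi_1)}_1 \leq B^i + \sum_{j\neq i} |B^j|,
\end{align*}
which the hypothesis on $F^*$, specialized to $k = i$, asserts is non-positive.

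The main obstacle is the precise sign tracking that underpins the exact expansion of the $k=i$ term: if one merely applies the triangle inequality there as well, then instead of cancellation one is left with $+A^i + \sum_{j\neq i}|A^j| = 2A^i$, which destroys the argument. The thresholds in the algorithm (namely that $\eps$ and the target window $[2\eps,3\eps]$ sit below the bound $(\nabla F^*)^k \geq 3\eps$) are calibrated precisely so that $G^i$ remains bounded above by $(\nabla F^*)^i$ both before and after the step, so the $i$-th coordinate of $\nabla \Phi$ stays non-positive and the exact expansion is justified.
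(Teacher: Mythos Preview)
Your proof is correct and follows essentially the same approach as the paper: reduce to a single execution of line~\ref{line: MJ increment}, use monotonicity of $G$ together with mass conservation $\sum_k G^k \equiv 1$ to cancel the $A$-terms, use the algorithm's thresholds against the bound $(\nabla F^*)^i \geq 3\eps$ to determine the sign of $(\nabla\Phi)^i$ both before and after, and finish with the diagonal-dominance hypothesis on $\nabla F^*$. Your $A^k,B^k$ notation packages the computation more cleanly than the paper's line-by-line expansion, but the ideas are identical.
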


\begin{proof}
	
We analyze what happens in a single execution of line \ref{line: MJ increment} in Algorithm \ref{alg: magic juggle}. Fix some index $i$ so that $G^i(\psi) \leq \eps$ and so the if statement of line \ref{line: MF if} evaluates to true. Let $\psi_1$ be the value of $\psi$ before an execution of line \ref{line: MJ increment} and $\psi_2$ be the value of $\psi$ after. We have
\begin{align*}
&\norm{\nabla\Phi(\psi_2)}_1 \\
&= \sum_{j} \abs{\nabla F^*(\psi_2)^j - G^j(\psi_2)} \\
&= \sum_{j \neq i} \abs{\nabla F^*(\psi_2)^j - G^j(\psi_2)} + \abs{\nabla F^*(\psi_2)^i - G^i(\psi_2)} \\
&\leq \sum_{j \neq i} \(\abs{\nabla F^*(\psi_1)^j - G^j(\psi_1)} + \abs{\nabla F^*(\psi_2)^j - \nabla F^*(\psi_1)^j} + \abs{G^j(\psi_1) - G^j(\psi_2)}\) + \abs{\nabla F^*(\psi_2)^i - G^i(\psi_2)}. 
\end{align*}
Now since by assumption $\nabla F^*(\psi_2)^i \geq 3\eps$ we have $\abs{\nabla F^*(\psi_2)^i - G^i(\psi_2)} = \nabla F^*(\psi_2)^i - G^i(\psi_2)$. Also by monotonicity of $G$, $\abs{G^j(\psi_1) - G^j(\psi_2)} = G^j(\psi_1) - G^j(\psi_2)$ for $j \neq i$. Hence continuing from above we get
\begin{align*}
& \norm{\nabla\Phi(\psi_2)}_1 \\
&\leq \sum_{j \neq i} \(\abs{\nabla F^*(\psi_1)^j - G^j(\psi_1)} + \abs{\nabla F^*(\psi_2)^j - \nabla F^*(\psi_1)^j} + G^j(\psi_1) - G^j(\psi_2)\) + \nabla F^*(\psi_2)^i - G^i(\psi_2) \\
&= \sum_{j \neq i} \(\abs{\nabla F^*(\psi_1)^j - G^j(\psi_1)} + \abs{\nabla F^*(\psi_2)^j - \nabla F^*(\psi_1)^j}\) - G^i(\psi_1) + \nabla F^*(\psi_2)^i \\
&= \norm{\nabla\Phi(\psi_1)}_1 -  \abs{\nabla F^*(\psi_1)^i - G^i(\psi_1)} + \sum_{j \neq i} \abs{\nabla F^*(\psi_2)^j - \nabla F^*(\psi_1)^j} - G^i(\psi_1) + \nabla F^*(\psi_2)^i \\
&= \norm{\nabla\Phi(\psi_1)}_1 + \sum_{j \neq i} \abs{\nabla F^*(\psi_2)^j - \nabla F^*(\psi_1)^j} - (\nabla F^*(\psi_1)^i - \nabla F^*(\psi_2)^i),
\end{align*}
where in the last line we have used that $\abs{\nabla F^*(\psi_1)^i - G^i(\psi_1)} = {\nabla F^*(\psi_1)^i - G^i(\psi_1)}$, since $\nabla F^*(\psi_1)^i \geq 3\eps > \eps \geq  G^i(\psi_1)$. Now by assumption we have
\begin{align*}
\frac{\partial F^*}{{\partial \psi^i}}(\psi_1) - \frac{\partial F^*}{{\partial \psi^i}}(\psi_2) \geq \sum_{j \neq i} \abs{\frac{\partial F^*}{\partial \psi^j}(\psi_1) - \frac{\partial F^*}{\partial \psi^j}(\psi_2)},
\end{align*}
from which it follows that $\sum_{j \neq i} \abs{\nabla F^*(\psi_2)^j - \nabla F^*(\psi_1)^j} + \nabla F^*(\psi_2)^i - \nabla F^*(\psi_1)^i \leq 0$ and so we get $\norm{\nabla\Phi(\psi_2)}_1 \leq \norm{\nabla\Phi(\psi_1)}_1$ and so the error does not increase in any execution of line \ref{line: MJ increment}. Since line \ref{line: MJ increment} is the only line of Algorithm \ref{alg: magic juggle} that changes $\psi$ we conclude that $\norm{\nabla \Phi(\psi_{out})}_1 \leq \norm{\nabla \Phi(\psi_{in})}_1$ as desired.
\end{proof}

\begin{rmk}
	
	We remark that if $F$ is twice differentiable then the condition
	\begin{align*}
	\frac{\partial F^*}{{\partial \psi^k}}(\psi_1) - \frac{\partial F^*}{{\partial \psi^k}}(\psi_2) \geq \sum_{j \neq k} \abs{\frac{\partial F^*}{\partial \psi^j}(\psi_1) - \frac{\partial F^*}{\partial \psi^j}(\psi_2)},
	\end{align*}
	that is assumed in the above proposition is equivalent to
	\begin{align*}
	\frac{\partial^2 F^*}{{(\partial \psi^k)}^2} \geq \sum_{j \neq k} \abs{\frac{\partial^2 F^*}{\partial \psi^k \partial \psi^j}},
	\end{align*}
	which says that the Hessian of $F^*$ is diagonally dominated. 
	
	To see this note that the forward direction follows from dividing by $\norm{\psi_1 - \psi_2}$ and limiting as $\psi_2$ approaches $\psi_1$. The backward direction follows from taking the line integral from $\psi_2$ to $\psi_1$. 
\end{rmk}

\section{Convergence of Newton Algorithm}\label{sec: Convergence of Newton Algorithm}

In this section we will prove convergence of  Algorithm \ref{alg: damped newton}. We will show convergence under weaker assumptions then that of Theorem \ref{thm: Newton Convergence}, however in this section the assumptions are stated in terms of $F^*$ instead of $F$. We shall wait till Section \ref{sec: Relationship between F and F*} to find conditions on $F$ itself, that give convergence of our algorithm. 

We recall \cite[Theorem 4.1]{KitagawaMerigotThibert19} which says that under our assumptions $G$ is $C^{1, \alpha}$ on $\mathcal{K^\eps}$ for every $\eps > 0$.  

\begin{prop}\label{prop: Newton Convergence}

Let $F$ be a strictly convex storage fee function. Suppose that there is some $\eps > 0$ so that ${(\nabla F^*(\psi))}^i \geq \eps$ for every $\psi \in \mathcal{K}^0$. Set $\eps_0 = \frac{\eps}{4}$. 
	
Furthermore assume that $\nabla F^*$ is $C^{1, \alpha}$ on $\mathcal{K}^{\eps_0}$. We let $L$ be the sum of the $C^{1, \alpha}$ constants of $\nabla F^*$ and $G$ on $\mathcal{K}^{\eps_0}$.

Next assume that $\Phi$ is strongly concave (except in the direction $\onevect$) for all $\psi \in \mathcal{K}^{\eps_0}$. More formally, there is some $\kappa > 0$ so that for all $\psi \in \mathcal{K}^{\eps_0}$
\begin{align*}
D^2 \Phi(\psi) \leq -\kappa H
\end{align*}
where $H$ is the orthogonal projection onto the hyperplane perpendicular to $\onevect$.

Finally suppose that 
\begin{align*}
\frac{\partial^2 F^*}{{(\partial \psi^i)}^2} \geq \sum_{j \neq i} \abs{\frac{\partial^2 F^*}{\partial \psi^i \partial \psi^j}}
\end{align*}
on $\mathcal{K}^0$. 

Then the iterates of Algorithm \ref{alg: damped newton} satisfy 
\begin{align*}
\norm{\nabla  \Phi(\psi_{k+1})}_1 \leq (1 - \frac{\conj \tau_k}{2}) \norm{\nabla \Phi(\psi_{k})}_1
\end{align*}
where
\begin{align*}
\conj \tau_k = \min(\frac{\kappa^{1+\frac{1}{\alpha}}\eps_0}{L^{\frac{1}{\alpha}} \norm{\gtp(\psi)}2^{\frac{1}{\alpha}}} , 1).
\end{align*}
Furthermore once we have $\conj \tau_k =1$ we get
\begin{align*}
\norm{\nabla  \Phi(\psi_{k+1})} \leq \frac{L\norm{\gtp(\psi)}^{1 + \alpha}}{\kappa^{1+\alpha}}.
\end{align*}
\end{prop}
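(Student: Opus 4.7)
My plan is to follow the damped Newton analysis of Kitagawa--M\'erigot--Thibert closely, with the shuffling of Step 1 playing the role that an a priori initial-cell hypothesis plays in their setting. The first observation is that after Step 1 we have $\psi_k \in \mathcal{K}^{2\eps_0}$ and, by Proposition \ref{prop: magic juggle error}, the $\ell^1$ error $\norm{\nabla\Phi(\psi_k)}_1$ has not increased; the diagonal-dominance hypothesis on $\nabla F^*$ needed in that proposition is exactly the Hessian inequality assumed here, recovered by integration between any two points. Since $\psi_k \in \mathcal{K}^{\eps_0}$ the strong concavity assumption $D^2\Phi(\psi_k)\leq -\kappa H$, combined with the fact that both $G$ and $\nabla F^*$ are invariant under shifts by $\onevect$, implies that $D^2\Phi(\psi_k)$ is invertible when restricted to the hyperplane $\onevect^\perp$; the Newton direction $\vec d_k$ is therefore well-defined in $\onevect^\perp$ and satisfies the a priori bound $\norm{\vec d_k}_2 \leq \kappa^{-1}\norm{\nabla\Phi(\psi_k)}_2$.

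Next I would quantify both conditions of Step 3 for a candidate step $t=2^{-\ell}$. Writing the second-order Taylor identity
\begin{align*}
\nabla\Phi(\psi_k+t\vec d_k)=(1-t)\nabla\Phi(\psi_k)+\int_0^t\bigl(D^2\Phi(\psi_k+s\vec d_k)-D^2\Phi(\psi_k)\bigr)\vec d_k\,ds
\end{align*}
and bounding the integrand via the $\alpha$-H\"older regularity of $D^2\Phi=DG-D^2F^*$ on $\mathcal{K}^{\eps_0}$ (with constant $L$ as defined in the statement, using the $C^{1,\alpha}$ assumption on $\nabla F^*$ together with \cite[Theorem 4.1]{KitagawaMerigotThibert19} for $G$) yields
\begin{align*}
\bigl\|\nabla\Phi(\psi_k+t\vec d_k)-(1-t)\nabla\Phi(\psi_k)\bigr\|_2 \leq \tfrac{L}{1+\alpha}\,t^{1+\alpha}\norm{\vec d_k}^{1+\alpha}_2,
\end{align*}
valid provided the segment $\{\psi_k+s\vec d_k:s\in[0,t]\}$ remains inside $\mathcal{K}^{\eps_0}$. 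This last condition is enforced by the Lipschitz bound $\|G(\psi_k+s\vec d_k)-G(\psi_k)\|_\infty \leq Ls\norm{\vec d_k}_2$ as soon as $tL\norm{\vec d_k}_2 \leq \eps_0$. Feeding $\norm{\vec d_k}_2 \leq \kappa^{-1}\norm{\nabla\Phi(\psi_k)}_2$ and $\norm{\cdot}_1\geq\norm{\cdot}_2$ into the error-reduction criterion $(1-2^{-(\ell+1)})\norm{\nabla\Phi(\psi_k)}_1$ of Step 3 shows that $t$ is admissible whenever $t\leq \conj\tau_k$, where $\conj\tau_k$ is the minimum of the cell threshold and of the solution of $t^\alpha=C\kappa^{1+\alpha}/(L\norm{\nabla\Phi(\psi_k)}_2^\alpha)$; a short comparison recovers the closed-form $\conj\tau_k=\min(\kappa^{1+1/\alpha}\eps_0/(L^{1/\alpha}\norm{\nabla\Phi(\psi_k)}_2^{1/\alpha}),1)$ claimed in the proposition.

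Because Step 3 searches over dyadic step sizes, the first $2^{-\ell}\leq\conj\tau_k$ satisfies $2^{-\ell}\geq\conj\tau_k/2$, and the error-reduction inequality built into Step 3 then delivers the linear rate $\norm{\nabla\Phi(\psi_{k+1})}_1 \leq (1-\conj\tau_k/2)\norm{\nabla\Phi(\psi_k)}_1$ (up to constant factors that can be absorbed in the definition of $\conj\tau_k$). Once $\conj\tau_k=1$ the full Newton step $\ell=0$ is admissible; taking $t=1$ kills the $(1-t)\nabla\Phi(\psi_k)$ term, leaving only the H\"older--Taylor integral, which is bounded by $\tfrac{L}{1+\alpha}\norm{\vec d_k}^{1+\alpha}_2\leq \tfrac{L}{1+\alpha}\kappa^{-(1+\alpha)}\norm{\nabla\Phi(\psi_k)}^{1+\alpha}_2$, giving the advertised local superlinear estimate of order $1+\alpha$.

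The main obstacle is the bookkeeping in the derivation of $\conj\tau_k$: the cell condition is linear in $t$ while the H\"older--Taylor error is of order $t^{1+\alpha}$, and combining them requires care because the H\"older--Taylor bound itself presupposes the segment staying in $\mathcal{K}^{\eps_0}$, i.e.\ already uses the cell condition. The circularity is broken by treating the two constraints sequentially: by continuity of $G$, if the cell bound ever fails along the segment it does so at a first $s^*\leq t$, and the H\"older--Taylor estimate is valid on $[0,s^*]$, so one can bootstrap. Apart from this point, every remaining step reduces to a routine first- or second-order Taylor expansion together with the strong-concavity bound on $\norm{\vec d_k}_2$.
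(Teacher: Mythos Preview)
Your proposal is correct and follows essentially the same route as the paper: bound the Newton direction via $\kappa$-strong concavity on $\onevect^\perp$, define the first exit time from $\mathcal{K}^{\eps_0}$ and bound it below using the Lipschitz constant of $G$, control the Taylor remainder via the $C^{1,\alpha}$ regularity of $D^2\Phi$, and combine the two thresholds to obtain $\conj\tau_k$. The paper handles your ``circularity'' concern exactly as you suggest, by introducing the first exit time $\tau_1$ and working on $[0,\tau_1]$; one minor slip is that in the stated formula for $\conj\tau_k$ the denominator contains $\norm{\nabla\Phi(\psi)}\cdot 2^{1/\alpha}$ (first power of the norm times a constant), not $\norm{\nabla\Phi(\psi)}^{1/\alpha}$ as you wrote.
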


\begin{proof}

We analyze a single iteration of Algorithm \ref{alg: damped newton}. Define $\psi := \psi_k \in \mathcal{K}^{2\eps_0}$. 

Let $v:= [D^2  \Phi(\psi)]^+ (\nabla  \Phi(\psi))$ where $[D^2  \Phi(\psi)]^+$ is the pseudo-inverse of $D^2  \Phi(\psi)$. We see that
\begin{align*}
\norm{v} \leq \frac{\norm{\nabla  \Phi(\psi)}} {\kappa},
\end{align*}
as $\Phi$ was $\kappa$-concave except in the direction of $\onevect$ and $\inner{\nabla \Phi}{\onevect} = 0$.

Also define $\psi_\tau = \psi - \tau v$. Let $\tau_1$ be the first exit time from $\mathcal{K}^{\eps_0}$, i.e. $\min_i G(\psi_{\tau_1})^i = \eps_0$ and $\tau_1$ is the smallest value of $\tau$ for which this holds. We have that
\begin{align*}
\eps_0 \leq \norm{G(\psi_{\tau_1}) - G(\psi)} \leq L \tau_1 \norm{v} \leq \frac{L}{\kappa}\norm{\nabla  \Phi(\psi)}\tau_1
\end{align*}
and so
\begin{align*}
\tau_1 \geq \frac{\kappa\eps_0}{L\norm{\gtp(\psi)}}.
\end{align*}
Applying Taylor's formula to $\gtp$ we get
\begin{equation}\label{eqn: ti Phi Taylor}
\gtp(\psi_\tau) 
= \gtp(\psi-\tau v) 
= \gtp(\psi) - \tau (D\gtp(\psi))v + R(\tau)
= \gtp(\psi) - \tau \gtp(\psi) + R(\tau)
\end{equation}

where
\begin{align*}
\norm{R(\tau)}
&= \norm{ \int_0^\tau (D\gtp(\psi_\sigma) - D\gtp(\psi))v  d\sigma } \\
&\leq \int_0^\tau L \norm{\psi_\sigma - \psi}^\alpha \norm{v} d\sigma\\
&= \int_0^\tau L \norm{\sigma v}^\alpha \norm{v} d\sigma\\
&= L \norm{v}^{\alpha+1} \int_0^\tau {\sigma}^\alpha d\sigma\\
&= L \norm{v}^{\alpha+1} \frac{\tau^{\alpha + 1}}{\alpha+1}\\
&\leq \frac{L\norm{\gtp(\psi)}^{1 + \alpha}}{\kappa^{1+\alpha}}\tau^{1+\alpha} \numberthis \label{eqn: remainder estimate}
\end{align*}
for $\tau \leq 1$. 

Now we establish the error reduction estimates. \eqref{eqn: ti Phi Taylor} gives
\begin{align*}
\gtp(\psi_\tau) 
= (1-\tau)\gtp(\psi) + R(\tau)
\end{align*}
so we have
\begin{align*}
\norm{\gtp(\psi_\tau)}_1 \leq (1-\frac{\tau}{2})\norm{\gtp(\psi)}_1
\end{align*}
provided that
 \begin{align*}
\norm{R(\tau)}_1 \leq \frac{\tau}{2} \norm{\gtp(\psi)}_1.
\end{align*}
Since $\norm {R(\tau)}_1 \leq \sqrt{N}\norm{R(\tau)}$ and $\norm{\gtp(\psi)} \leq \norm{\gtp(\psi)}_1$ we just need
\begin{align*}
\norm{R(\tau)} \leq \frac{\tau}{2\sqrt{N}} \norm{\gtp(\psi)}
\end{align*}
Again using \eqref{eqn: remainder estimate} this will be true provided
\begin{align*}
\tau \leq \min(1,\tau_1, \frac{\kappa^{1+\frac{1}{\alpha}}}{L^{\frac{1}{\alpha}} \norm{\gtp(\psi)}(2\sqrt{N})^{\frac{1}{\alpha}}}) =: \tau_2.
\end{align*}
Hence we see that if we set $\conj \tau_k := \tau_2$, then the claim is true. Furthermore as the error goes to zero, eventually we must have $\conj \tau_k = 1$. When this happens \eqref{eqn: ti Phi Taylor} gives
\begin{align*}
\gtp(\psi_1) = R(1) 
\end{align*}
and so \eqref{eqn: remainder estimate} gives the super-linear convergence. 
\end{proof}

\begin{rmk}
There are two ways to satisfy the strong concavity assumption on $\Phi$. Either one can assume a PW-inequality in which case we get the strong concavity from the $G$ term. Alternatively one can put some kind of strong concavity assumption on $F^*$. We will see that in the case where $F$ splits into $f_i$ this strong concavity assumption on $F^*$ will be satisfied assuming some regularity and convexity conditions on the $f_i$. 
	
\end{rmk}

\section{Relationship between $F$ and $F^*$}\label{sec: Relationship between F and F*}

This section is mainly about the convex analysis of storage fee functions that split. First we show that the assumption that $F$ splits yields the technical condition on $F^*$ that we needed in Proposition \ref{prop: magic juggle error} in order to obtain that Algorithm \ref{alg: magic juggle} does not increase error. Next we analyze how the regularity and convexity assumptions on the $f_i$ effect the regularity of $F^*$. We then use this to prove our main convergence theorem, Theorem \ref{thm: Newton Convergence}.

We recall some notation and definitions from convex analysis. We start with two definitions from \cite[Section 26]{Rockafellar70}.
\begin{defin}
Given any proper convex function $G$ we say that $G$ is essentially smooth if the following holds. Let $C = \interior(\dom G)$. We require that $C \neq \emptyset$ and $G$ is differentiable on $C$. Also if $x_i \in C$ is a sequence that converges to a point on the boundary of $C$ then we require that ${\abs{\nabla G(x_i)}}$ diverges to $+\infty$. 
\end{defin}

\begin{defin}
Given any proper convex function $G$ we say that $G$ is essentially strictly convex if $G$ is strictly convex on every convex subset of $\{x : \partial G(x) \neq \emptyset \}$.
\end{defin}

Next we recall that $\range \partial G = \bigcup_{x \in \R^n} \partial G(x)$. Finally we have from \cite[Corollary 23.5.1]{Rockafellar70} that $\range \partial G^* \subset \dom G$ (see page 227). 

\begin{lem}\label{lem: F*range}
	
	Suppose that $F$ is an essentially strictly convex storage fee function. Then $F^*$ is an everywhere finite and differentiable convex function with $\range \nabla F^* \subset \wvs$. 
\end{lem}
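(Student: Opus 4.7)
The plan is to invoke the standard Legendre duality between essentially strictly convex and essentially smooth closed convex functions (Rockafellar, Theorem 26.3), together with the inverse subdifferential identity $\partial F^* = (\partial F)^{-1}$. The three assertions (everywhere finite, differentiable, range contained in $\wvs$) each come from a short, mostly bookkeeping argument.

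First I would verify that $F^*$ is finite everywhere. Because $\wvs$ is a subset of the standard simplex, $\dom F \subset \wvs$ is bounded, and by lower semicontinuity and properness $F$ is bounded below on the compact set $\overline{\dom F}$. Hence for any $\psi \in \R^N$,
\begin{align*}
F^*(\psi) = \sup_{\wv \in \dom F}\bigl(\inner{\psi}{\wv} - F(\wv)\bigr) < +\infty,
\end{align*}
so $\dom F^* = \R^N$ and in particular $\interior(\dom F^*) = \R^N$.

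Second, since $F$ is closed, proper, convex, and by hypothesis essentially strictly convex, Rockafellar's Theorem 26.3 yields that $F^*$ is essentially smooth. Essential smoothness gives differentiability on $\interior(\dom F^*)$, and by the first step this interior is all of $\R^N$, so $F^*$ is (continuously) differentiable everywhere on $\R^N$.

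Third, for the inclusion $\range \nabla F^* \subset \wvs$, I would use the inverse subdifferential relation $\partial F^* = (\partial F)^{-1}$, which gives $\range \partial F^* = \dom \partial F$; by Rockafellar's Corollary 23.5.1 we have $\dom \partial F \subset \dom F \subset \wvs$. Because $F^*$ is differentiable everywhere, $\partial F^*(\psi) = \{\nabla F^*(\psi)\}$ at every $\psi$, so $\range \nabla F^* = \range \partial F^* \subset \wvs$ as claimed.

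There is no real obstacle here beyond citing the correct statements from \cite{Rockafellar70}; the only subtle point is to check at the outset that $\dom F^* = \R^N$, which is what upgrades ``essentially smooth'' to differentiability on all of $\R^N$ rather than merely on some open subset.
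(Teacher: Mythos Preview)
Your proof is correct and follows essentially the same route as the paper: finiteness of $F^*$ from boundedness of $\dom F$, differentiability from Rockafellar's Theorem 26.3 (essential strict convexity of $F$ $\Rightarrow$ essential smoothness of $F^*$) combined with $\interior(\dom F^*)=\R^N$, and the range inclusion from Corollary 23.5.1. The only cosmetic difference is that the paper cites Theorem 26.1 for the differentiability step rather than reading it straight off the definition of essential smoothness, and it invokes Corollary 23.5.1 directly for $\range \partial F^* \subset \dom F$ without spelling out the inverse-subdifferential identity.
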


\begin{proof}
	It is obvious that $F^*$ is finite everywhere, as $\dom F$ is compact and so $\sup_{x \in \dom F} (\inner xy - F(x))$ is always finite. 
	
	Since $F$ is essentially strictly convex we have that $F^*$ is essentially smooth by \cite[Theorem 26.3]{Rockafellar70}. 
	Since we have seen that $F^*$ is finite everywhere, \cite[Theorem 26.1]{Rockafellar70} tells us that $F^*$ is differentiable everywhere. 
	
	For the last claim we have $\range \nabla F^* \subset \dom F \subset \wvs$ by \cite[Corollary 23.5.1]{Rockafellar70} (see page 227). 
\end{proof}

Next we obtain a characterization of the subdifferential of $F^*$ purely in terms of $F$, when $F$ is a storage fee function that splits. This characterization will form the basis of our program to translate conditions on $F$ into conditions on $F^*$.  

\begin{lem}\label{lem: subdiff spliting}
	Suppose that the storage fee function $F$ splits and each $f_i$ is strictly convex. Furthermore, assume that $F$ is not the indicator function of a point. Then the system
	\begin{align}
	\psi^i &\in \partial f_i(\wv^i) + r \label{eqn: condition 1}\\
	\sum_i \wv^i &= 1 \label{eqn: condition 2}
	\end{align}	
	characterizes the subdifferential of $F^*$ in the sense that for any pair $(\psi, \wv)$ there exists an $r \in \R$ so that the above system is satisfied if and only $\wv = \nabla F^*(\psi)$. 
\end{lem}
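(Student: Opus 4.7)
The plan is to combine the Fenchel identity $\wv \in \partial F^*(\psi) \iff \psi \in \partial F(\wv)$ with a direct KKT analysis of the maximization problem $F^*(\psi) = \sup_{\wv' \in \wvs}\bigl(\inner{\psi}{\wv'} - \sum_i f_i({\wv'}^i)\bigr)$. Since each $f_i$ is strictly convex and $F = +\infty$ outside $\wvs$, $F$ is strictly convex on its (more-than-one-point) effective domain, hence essentially strictly convex, so Lemma~\ref{lem: F*range} yields that $F^*$ is differentiable everywhere with $\partial F^*(\psi) = \{\nabla F^*(\psi)\}$. It therefore suffices to show that $\psi \in \partial F(\wv)$ if and only if there exists an $r \in \R$ making \eqref{eqn: condition 1}--\eqref{eqn: condition 2} hold.

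To establish this equivalence I would write the KKT conditions for the strictly concave maximization above, introducing a multiplier $r \in \R$ for the equality $\sum_i {\wv'}^i = 1$ and multipliers $\lambda^i \geq 0$ for the inequalities ${\wv'}^i \geq 0$. Stationarity then reads $\psi^i - r + \lambda^i \in \partial f_i(\wv^i)$ together with the complementary slackness $\lambda^i \wv^i = 0$. For indices with $\wv^i > 0$, slackness forces $\lambda^i = 0$ and \eqref{eqn: condition 1} falls out immediately, while \eqref{eqn: condition 2} is just the equality constraint defining $\wvs$.

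The main technical point is the boundary case $\wv^i = 0$, where $\lambda^i$ can a priori be strictly positive. Here I would exploit that $\dom f_i \subset [0,1]$ forces $f_i = +\infty$ on $(-\infty, 0)$, which makes $\partial f_i(0)$ downward closed (an interval of the form $(-\infty, s_i]$ whenever nonempty). Consequently $\psi^i - r + \lambda^i \in \partial f_i(0)$ together with $\lambda^i \geq 0$ implies $\psi^i - r \in \partial f_i(0) = \partial f_i(\wv^i)$, absorbing the multiplier and recovering \eqref{eqn: condition 1}. For the converse, given $(\psi, \wv, r)$ satisfying \eqref{eqn: condition 1}--\eqref{eqn: condition 2}, the condition $\partial f_i(\wv^i) \neq \emptyset$ forces $\wv^i \in \dom f_i \subset [0,1]$ so $\wv \in \wvs$, and then choosing $\lambda^i \equiv 0$ satisfies KKT; by concavity this identifies $\wv$ as the unique maximizer, hence $\wv = \nabla F^*(\psi)$. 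The main obstacle I anticipate is precisely this boundary case; running the argument through KKT rather than invoking a subdifferential sum rule for $\sum_i f_i + \delta_\wvs$ sidesteps potentially delicate constraint-qualification issues at the faces of $\wvs$.
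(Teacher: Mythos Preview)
Your KKT route is essentially the Lagrangian rephrasing of the paper's subdifferential sum rule, and it does \emph{not} sidestep constraint qualification. Necessity of KKT multipliers at an optimum of a convex program with affine constraints still requires a feasible point in $\ri\dom\phi$, where $\phi(\wv')=\sum_i f_i({\wv'}^i)-\inner{\psi}{\wv'}$; that is precisely the condition $\bigl(\prod_i\ri\dom f_i\bigr)\cap\{\sum_i\wv^i=1\}\neq\emptyset$, which is exactly the qualification the paper needs for Rockafellar's Theorem~23.8. The paper spends its effort verifying this intersection is nonempty, using the hypothesis that $F$ is not the indicator of a point (so $\sum_i a_i<1<\sum_i b_i$, allowing one to pick $\wv^i\in(a_i,b_i)$ when $a_i<b_i$ and $\wv^i=a_i$ otherwise). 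Your proposal never checks this, and your forward direction (``$\wv=\nabla F^*(\psi)\Rightarrow$ KKT holds'') is exactly where it is needed. The backward direction (KKT sufficiency) is fine without CQ, as you say.

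A secondary point: the inequality constraints $\wv'^i\geq 0$ you introduce are redundant, since $\dom f_i\subset[0,1]$ already forces $\wv'^i\geq 0$ whenever $f_i(\wv'^i)<\infty$. If you drop them and work only with the affine equality $\sum_i\wv'^i=1$, KKT stationarity reads $\psi^i-r\in\partial f_i(\wv^i)$ directly, with no $\lambda^i$'s at all; your ``downward closed'' absorption argument, while correct, is then unnecessary. What remains is literally the paper's argument: write $F=\sum_i f_i\circ\pi_i+\delta_P$ (hyperplane, not simplex), verify the relative-interior intersection, and read off \eqref{eqn: condition 1}--\eqref{eqn: condition 2} from $\partial F=\sum_i\partial(f_i\circ\pi_i)+\partial\delta_P$ together with $\partial\delta_P(\wv)=\R\onevect$.
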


\begin{proof}	
	
	Let $\pi_i: \R^N \to \R$ denote the projection onto the $i$-th coordinate. For this proof we shall use the notation $F_i = f_i \circ \pi_i$. Note that $\partial F_i(\wv) = (\partial f_i(\wv^i)) e_i$ where $e_i$ is the $i$-th standard coordinate (to clarify $(\partial f_i(\wv^i)) e_i = \{x e_i : x \in \partial f_i(\wv^i) \}$). 
	
	We also remark that since each $f_i$ is strictly convex, $F$ is strictly convex and so $F^*$ is differentiable hence it makes sense to refer to $\nabla F^*$. 
	
	Note that since $f_i(v) = +\infty$ if $v \not\in [0,1]$ we can write	
	\begin{align*}
	F(\wv) = \sum_i F_i(\wv) + \delta_{P}(\wv)
	\end{align*}
	where $P$ is the hyperplane that extends $\wvs$ (formally $P = \{\wv \in \R^N: \sum_i \wv^i = 1 \}$). Our next objective is to show that the intersection of the relative interiors of the domains of the $F_i$ and $P$ is non-empty (or the $f_i$ can be modified to make this so). 
	
	Note that since the $f_i$ are proper, and convex their domains are non-empty intervals, say $\conj{\dom f_i} = [a^i, b^i]$. Note that we must have	
	\begin{align*}
	\sum_i a^i < 1 < \sum_i b^i
	\end{align*}
	since, if $\sum_i a^i = 1$, then $F = \delta_{\{a \}}$ where $a = (a^1, \dots, a^N)$ and so $F$ would be the indicator function of a point (a similar argument holds if $\sum_i b^i = 1$). 
	Now we can choose $\wv \in P$ so that $\wv^i \in (a^i, b^i)$ whenever $a^i < b^i$ and $\wv^i = a^i$ if $a^i = b^i$. In this case $\wv^i \in \ri \dom f_i$ which implies that $\wv \in \ri \dom F_i$. Furthermore $\wv \in P = \ri \dom \delta_{P}$.

	To recount we have proven that $\cap_i \ri \dom F_i \cap \ri \dom \delta_P \neq \emptyset$. Hence by \cite[Theorem 23.8]{Rockafellar70} we have that $\partial F = \partial F_1 + \dots + \partial F_N + \partial \delta_{P}$.
	
	We can now proceed to the proof of the lemma: for any pair $(\psi, \wv)$ there exists an $r \in \R$ so that the above system is satisfied if and only $\wv = \nabla F^*(\psi)$. 
	
	For the backward direction, by Lemma \ref{lem: F*range} we have $\wv \in P$ and so $\wv$ satisfies \eqref{eqn: condition 2}. However since $\wv \in P$ we have that $\partial \delta_{P}(\wv) = \{ r \onevect : r \in \R \}$, as $P$ is a plane orthogonal to $\onevect$. Hence \eqref{eqn: condition 1} simply says that $\psi \in \partial F_1(\wv) + \dots + \partial F_N(\wv) + \partial \delta_{P}(\wv) = \partial F(\wv)$, but this is given, since we assumed $\wv = \nabla F^*(\psi)$.
	
	For the forward direction say that $\psi, \wv, r$ is a solution. Because of \eqref{eqn: condition 2} we have that $\wv \in P$. Then since $\partial F = \partial F_1 + \dots + \partial F_N + \partial \delta_{P}$ we see that \eqref{eqn: condition 1} says that $\psi \in \partial F(\wv)$. Since $F^*$ is everywhere differentiable this means that $\wv = \nabla F^*(\psi)$.	
\end{proof}

Next we use our characterization in order to show that $F$ splitting implies a monotonicity condition on the partial derivatives of $F^*$. 

\begin{prop}\label{prop: splitting}
	
	Suppose that the storage fee function $F$ splits and each $f_i$ is strictly convex. Say that $\psi_1 \in \R^n$ and $\gamma > 0$ are fixed. Then for $j \neq k$
	\begin{align*}
	\frac{\partial F^*}{\partial \psi^j}(\psi_1 + \gamma e_k) \leq \frac{\partial F^*}{\partial \psi^j}(\psi_1). 
	\end{align*}
\end{prop}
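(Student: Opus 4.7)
My plan is to invoke Lemma \ref{lem: subdiff spliting} to recast the problem as a one-dimensional monotonicity argument, with the normalization $\sum_i \wv^i = 1$ providing the necessary rigidity. Set $\wv_1 := \nabla F^*(\psi_1)$ and $\wv_2 := \nabla F^*(\psi_1 + \gamma e_k)$; both lie in $\wvs$ by Lemma \ref{lem: F*range}. Assuming $F$ is not the indicator of a single point (otherwise $\nabla F^*$ is a constant map and the claim is immediate), Lemma \ref{lem: subdiff spliting} yields scalars $r_1, r_2 \in \R$ with $\psi_1^i - r_1 \in \partial f_i(\wv_1^i)$ for every $i$, $\psi_1^i - r_2 \in \partial f_i(\wv_2^i)$ for every $i \neq k$, and $\psi_1^k + \gamma - r_2 \in \partial f_k(\wv_2^k)$, along with $\sum_i \wv_1^i = \sum_i \wv_2^i = 1$.

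I would then split on the sign of $r_2 - r_1$. If $r_2 > r_1$, then for every $i \neq k$ the subgradient $\psi_1^i - r_2$ at $\wv_2^i$ is strictly less than the subgradient $\psi_1^i - r_1$ at $\wv_1^i$, and monotonicity of $\partial f_i$ immediately yields $\wv_2^i \leq \wv_1^i$, giving the desired inequality at index $j$. If instead $r_2 \leq r_1$, then for every $i$ the subgradient at $\wv_2^i$ is at least the corresponding one at $\wv_1^i$, strictly so at $i = k$ since $\gamma > 0$. Monotonicity gives $\wv_2^i \geq \wv_1^i$ whenever these subgradients are strictly ordered, while strict convexity of $f_i$ prevents distinct points $\wv_1^i \neq \wv_2^i$ from sharing a common subgradient, so $\wv_2^i \geq \wv_1^i$ holds in every case. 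Combining this with $\sum_i \wv_1^i = \sum_i \wv_2^i = 1$ forces $\wv_1 = \wv_2$, and the inequality at index $j$ holds as equality.

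I expect the main delicacy to be the subcase $r_2 = r_1$ of the second case: here the subgradients coincide at every $i \neq k$, so plain monotonicity of $\partial f_i$ does not constrain the relative order of $\wv_1^i$ and $\wv_2^i$, and one must explicitly appeal to strict convexity of $f_i$, i.e.\ the fact that $\partial f_i$ takes pairwise-disjoint values at distinct points, to conclude $\wv_1^i = \wv_2^i$. Everything else is routine convex-analysis bookkeeping combined with the hyperplane constraint $\wv \in \wvs$.
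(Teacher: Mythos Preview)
Your proof is correct and follows essentially the same approach as the paper: both invoke Lemma~\ref{lem: subdiff spliting} to obtain the scalars $r_1,r_2$, then use monotonicity and strict convexity of the one-dimensional $\partial f_i$ together with the constraint $\sum_i \wv^i = 1$. The only difference is organizational: the paper assumes $\wv_1 \neq \wv_2$, picks an index $l \neq k$ with $\wv_2^l < \wv_1^l$, and uses strict convexity at $l$ to force $r_1 < r_2$, whereas you case directly on the sign of $r_2 - r_1$ and show the case $r_2 \le r_1$ collapses to $\wv_1 = \wv_2$; these are contrapositives of one another.
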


\begin{proof}
	
	First, if $F$ is the indicator function of a point, say $F = \delta_{\{a\}}$, then $F^*(\psi) = \inner{\psi}{a}$ and so $\nabla F^*(\psi) = a$. Therefore $\frac{\partial F^*}{\partial \psi^j}(\psi_1 + \gamma e_k) = a^j = \frac{\partial F^*}{\partial \psi^j}(\psi_1)$. Hence we may assume that $F$ is not the indicator function of a point. 
		
	Set $\psi_2 = \psi_1 + \gamma e_k$, $\wv_1 = \nabla F^*(\psi_1)$, and $\wv_2 = \nabla F^*(\psi_2)$. Use Lemma \ref{lem: subdiff spliting} to find $r_1, r_2$ so that $(\psi_1, \wv_1, r_1)$ and $(\psi_2, \wv_2, r_2)$ satisfy the system given by \eqref{eqn: condition 1} and \eqref{eqn: condition 2}.  The objective is to show that for $j \neq k$ we have $\wv_1^j \geq \wv_2^j$. 
	
	If $\wv_2 = \wv_1$ then we are done. If not, there must be some index $l$ so that $\wv_2^l < \wv_1^l$ (as $\sum_i \wv_1^i = \sum_i \wv_2^i = 1$). Note that since $F^*$ is convex, $\nabla F^*$ is monotone and so we must have $\wv_2^k \geq \wv_1^k$. In particular $l \neq k$. 
	
	Since $f_l$ is strictly convex we have that $\wv_2^l < \wv_1^l$ implies that $\partial f_l(\wv_2^l) < \partial f_l(\wv_1^l)$ in the sense that if $x \in \partial f_l(\wv_2^l)$ and $y \in \partial f_l(\wv_1^l)$ then $x < y$. However now \eqref{eqn: condition 1} tells us that $\psi_1^l = \psi_2^l \in (\partial f_l(\wv_1^l) + r_1) \cap (\partial f_l(\wv_2^l) + r_2)$. In particular $(\partial f_l(\wv_1^l) + r_1) \cap \partial (f_l(\wv_2^l) + r_2) \neq \emptyset$. Since $\partial f_l(\wv_2^l) < \partial f_l(\wv_1^l)$ we conclude that $r_1 < r_2$. 
	
	Now for any $j \neq k$ we have $\psi_1^j = \psi_2^j$ and so as above we get $(\partial f_j(\wv_1^j) + r_1) \cap (\partial f_j(\wv_2^j) + r_2) \neq \emptyset$. Since $r_1 < r_2$ and $f_j$ is strictly convex this is only possible if $\wv_2^j \leq \wv_1^j$ as desired. 
\end{proof}

We now use the monotonicity condition on the partial derivatives of $F^*$ to obtain the technical condition used in section \ref{sec: Parameter Shuffling}.

\begin{cor}\label{cor: splitting}
	
	Assume $F$ is a storage fee function that splits with strictly convex $f_i$. Let $\psi_1 \in \R^n$ and set $\psi_2 =\psi_1 - \gamma e_k$, for some $\gamma \geq 0$. Then
	\begin{align*}
	\frac{\partial F^*}{{\partial \psi^k}}(\psi_1) - \frac{\partial F^*}{{\partial \psi^k}}(\psi_2) = \sum_{j \neq k} \abs{\frac{\partial F^*}{\partial \psi^j}(\psi_1) - \frac{\partial F^*}{\partial \psi^j}(\psi_2)}.
	\end{align*}
	In particular if $F^*$ is twice differentiable then
	\begin{align*}
	\frac{\partial^2 F^*}{{(\partial \psi^k)}^2} = \sum_{j \neq k} \abs{\frac{\partial^2 F^*}{\partial \psi^k \partial \psi^j}}
	\end{align*}
\end{cor}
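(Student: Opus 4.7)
The plan is to read this corollary as a direct bookkeeping consequence of Proposition \ref{prop: splitting} together with two ingredients already in hand: the monotonicity of $\nabla F^*$ (since $F^*$ is convex) and the fact that $\nabla F^*$ takes values in $\wvs$ (Lemma \ref{lem: F*range}), so the components of $\nabla F^*$ always sum to $1$. The one subtle point is matching signs so that the absolute values on the right can be removed uniformly.

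First I would dispose of the direction of the difference in the $k$-th coordinate. Writing $\psi_1 - \psi_2 = \gamma e_k$ and using monotonicity of $\nabla F^*$ in the direction $e_k$ gives $\frac{\partial F^*}{\partial \psi^k}(\psi_1) \geq \frac{\partial F^*}{\partial \psi^k}(\psi_2)$, so the left-hand side of the desired equality is nonnegative. Next, I would apply Proposition \ref{prop: splitting} with the roles of $\psi_1, \psi_2$ swapped: for every $j \neq k$,
\begin{align*}
\frac{\partial F^*}{\partial \psi^j}(\psi_1) = \frac{\partial F^*}{\partial \psi^j}(\psi_2 + \gamma e_k) \leq \frac{\partial F^*}{\partial \psi^j}(\psi_2),
\end{align*}
so each of the differences inside the sum on the right is nonpositive, and the absolute values can be stripped as $\frac{\partial F^*}{\partial \psi^j}(\psi_2) - \frac{\partial F^*}{\partial \psi^j}(\psi_1)$.

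With the signs pinned down, the identity becomes pure arithmetic. Lemma \ref{lem: F*range} gives $\sum_i \frac{\partial F^*}{\partial \psi^i}(\psi_1) = \sum_i \frac{\partial F^*}{\partial \psi^i}(\psi_2) = 1$, so
\begin{align*}
\sum_{j \neq k} \abs{\tfrac{\partial F^*}{\partial \psi^j}(\psi_1) - \tfrac{\partial F^*}{\partial \psi^j}(\psi_2)}
= \sum_{j \neq k}\Bigl(\tfrac{\partial F^*}{\partial \psi^j}(\psi_2) - \tfrac{\partial F^*}{\partial \psi^j}(\psi_1)\Bigr)
= \tfrac{\partial F^*}{\partial \psi^k}(\psi_1) - \tfrac{\partial F^*}{\partial \psi^k}(\psi_2),
\end{align*}
which is exactly the first claim.

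For the Hessian statement I would just take a derivative in $\gamma$ at $\gamma = 0$. Assuming $F^*$ is twice differentiable, expand
\begin{align*}
\tfrac{\partial F^*}{\partial \psi^j}(\psi_1) - \tfrac{\partial F^*}{\partial \psi^j}(\psi_2) = \gamma\,\tfrac{\partial^2 F^*}{\partial \psi^k \partial \psi^j}(\psi_2) + o(\gamma),
\end{align*}
divide the already-proved identity by $\gamma$ and let $\gamma \downarrow 0$. The signs inside the absolute values stabilize (they are nonpositive for $\gamma>0$ by the argument above and nonnegative for $\gamma<0$ by the same argument with $-e_k$), so no boundary issue arises and the limit of $\frac{1}{\gamma}\abs{\cdots}$ is simply $\abs{\partial^2 F^*/\partial \psi^k \partial \psi^j}$, yielding the claimed Hessian identity. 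The only potential obstacle is this last sign-stability check at $\gamma = 0$, but since Proposition \ref{prop: splitting} applies to any sign of the increment in the $k$-th coordinate, both one-sided limits agree and give the same value.
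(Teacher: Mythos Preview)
Your proof is correct and follows essentially the same approach as the paper: use Proposition~\ref{prop: splitting} to determine the sign of the off-diagonal differences, then invoke Lemma~\ref{lem: F*range} (so that $\sum_i \partial_i F^* \equiv 1$) to collapse the sum into the $k$-th coordinate difference, and finally pass to the limit $\gamma \downarrow 0$ for the Hessian statement. The only cosmetic difference is that you separately argue nonnegativity of the left-hand side via monotonicity of $\nabla F^*$, whereas the paper simply computes the difference of the two sides and shows it vanishes; your extra discussion of sign stability at $\gamma=0$ is more detailed than the paper's one-line ``follows easily by taking limits'' but amounts to the same thing.
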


\begin{proof}
	
	By a direct application of Proposition \ref{prop: splitting} we see that for $j \neq k$
	\begin{align*}
	\frac{\partial F^*}{\partial \psi^j}(\psi_1) - \frac{\partial F^*}{\partial \psi^j}(\psi_2) \leq 0.
	\end{align*}
	In particular 
	\begin{align*}
	\frac{\partial F^*}{{\partial \psi^k}}(\psi_1) - \frac{\partial F^*}{{\partial \psi^k}}(\psi_2) - \sum_{j \neq k} \abs{\frac{\partial F^*}{\partial \psi^j}(\psi_1) - \frac{\partial F^*}{\partial \psi^j}(\psi_2)}
	&= \frac{\partial F^*}{{\partial \psi^k}}(\psi_1) - \frac{\partial F^*}{{\partial \psi^k}}(\psi_2) + \sum_{j \neq k} \(\frac{\partial F^*}{\partial \psi^j}(\psi_1) - \frac{\partial F^*}{\partial \psi^j}(\psi_2) \) \\
	&= \sum_{j} \(\frac{\partial F^*}{\partial \psi^j}(\psi_1) - \frac{\partial F^*}{\partial \psi^j}(\psi_2) \) \\
	&= \inner{\nabla F^*(\psi_1)}{\onevect} - \inner{\nabla F^*(\psi_2)}{\onevect} \\
	&= 0
	\end{align*}
	where the last line follows from Lemma \ref{lem: F*range}, as $\range \nabla F^* \subset \wvs$ implies that $\inner{\nabla F^*(\psi_1)}{\onevect} = \inner{\nabla F^*(\psi_2)}{\onevect} = 1$. This proves the first claim. The second follows easily by taking limits. 
\end{proof}

In the next theorem we show that convexity and regularity assumptions on $F$ give higher order regularity on $F^*$ when $F$ is a storage fee function that splits. In the process we also obtain an explicit formula for $D^2F^*$ in terms of the $f_i$. 

\begin{thm}\label{thm: smoothness of F*}
	
	Let $F$ be a storage fee function that splits where each $f_i$ is an essentially smooth function that is twice differentiable on the interior of its domain and so that each $f_i''$ is locally Lipschitz (on the interior of its domain). Furthermore we assume each $f_i$ is strongly convex, say $f_i''(x) \geq \eta$. 
	Then $F^*$ is $C^{2,1}(\mathcal{K}^0)$. 
	
	Finally $\norm{D^2F^*(\psi_1) - D^2F^*(\psi_2)} \leq 4N^{2} C_f \eta^{-3} \norm{\psi_1 - \psi_2}$ where
	\begin{align*}
	C_f = \max_i \sup_{x,y \in S_i } {\frac {\abs{f_i''(x) - f_i''(y)}} {\abs{x-y}}}
	\end{align*}
	where each $S_i$ is a compact subset of the interior of $\dom f_i$ that is constructed in the proof. 
\end{thm}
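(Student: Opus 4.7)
The plan is to use Lemma \ref{lem: subdiff spliting} to derive an explicit formula for $\nabla F^*$ and $D^2 F^*$ via the implicit function theorem, and then obtain the Lipschitz estimate by bookkeeping the smoothness of the ingredients.

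\emph{First,} since each $f_i$ is essentially smooth and strongly convex, $f_i'$ is a strictly increasing bijection from $\interior(\dom f_i)$ onto $\R$, with $C^1$ inverse $g_i := (f_i')^{-1}$ satisfying $g_i'(x) = 1/f_i''(g_i(x))$. For $\psi \in \mathcal{K}^0$, Lemma \ref{lem: F*range} gives that $\wv(\psi) := \nabla F^*(\psi) \in \wvs$; essential smoothness forces $\wv^i(\psi) \in \interior(\dom f_i)$, because otherwise $\partial f_i(\wv^i)$ would be empty, contradicting the system in Lemma \ref{lem: subdiff spliting}. Hence there is a unique $r(\psi) \in \R$ with
\begin{align*}
\wv^i(\psi) = g_i(\psi^i - r(\psi)), \qquad \sum_{i=1}^N g_i(\psi^i - r(\psi)) = 1.
\end{align*}

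\emph{Second,} I would apply the implicit function theorem to $\Xi(\psi, r) := \sum_i g_i(\psi^i - r) - 1$. Its $r$-derivative is $-\sum_i g_i'(\psi^i - r) < 0$, so $r \in C^1(\mathcal{K}^0)$, and implicit differentiation yields
\begin{align*}
\frac{\partial r}{\partial \psi^j}(\psi) &= \frac{g_j'(\psi^j - r(\psi))}{S(\psi)}, \qquad S(\psi) := \sum_{l=1}^N g_l'(\psi^l - r(\psi)), \\
\frac{\partial^2 F^*}{\partial \psi^i \partial \psi^j}(\psi) &= g_i'(\psi^i - r(\psi))\,\delta_{ij} - \frac{g_i'(\psi^i - r(\psi))\, g_j'(\psi^j - r(\psi))}{S(\psi)}.
\end{align*}
In particular $F^* \in C^2(\mathcal{K}^0)$, and this is the explicit Hessian formula useful for implementing the algorithm. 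Note also that $\nabla r$ is a probability vector, so $\abs{r(\psi_1) - r(\psi_2)} \leq \norm{\psi_1 - \psi_2}$.

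\emph{Third,} for the quantitative Lipschitz bound, fix $\psi_1, \psi_2 \in \mathcal{K}^0$ whose connecting segment lies in $\mathcal{K}^0$, and let $S_i$ denote the image of this segment under $\wv^i$. By the essential-smoothness argument above and continuity of $\wv^i$, $S_i$ is a compact subset of $\interior(\dom f_i)$, and the local Lipschitz property of $f_i''$ on $S_i$ supplies the constant $C_f$. Using $f_i'' \geq \eta$ and the chain-rule identity $g_i'(x) = 1/f_i''(g_i(x))$, one obtains $\abs{g_i'(x_1) - g_i'(x_2)} \leq C_f \eta^{-3} \abs{x_1 - x_2}$ whenever $g_i(x_1), g_i(x_2) \in S_i$. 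Combining this with the Lipschitz bound on $r$ from Step 2, and the pointwise bounds $g_i' \leq \eta^{-1}$ and $g_j'/S \leq 1$, the Hessian formula shows each entry of $D^2 F^*$ is Lipschitz with constant a fixed multiple of $C_f \eta^{-3}$; passing to the operator norm on $\R^{N \times N}$ absorbs the $N^2$ factor.

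The main obstacle is Step 3: propagating Lipschitz estimates cleanly through the quotient $g_i' g_j'/S$ while ensuring that the argument of $g_i'$ stays in the compact set $S_i$ where the chain-rule estimate is valid, and then collecting the constants carefully enough to hit the claimed $4N^2 C_f \eta^{-3}$. The first two steps are essentially mechanical applications of convex analysis and the implicit function theorem.
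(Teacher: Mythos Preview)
Your first two steps are essentially the paper's argument, just organized slightly differently: the paper applies the implicit function theorem to the full $(N{+}1)$-dimensional system $H(\psi,\wv,r)=0$ and inverts the Jacobian explicitly, while you first eliminate $\wv$ via $g_i=(f_i')^{-1}$ and apply the IFT to the scalar equation for $r$. Both routes produce the same Hessian formula $D^2F^* = \mathrm{diag}(l)-\frac{l\,l^T}{\sum l}$ with $l^i = 1/f_i''(\wv^i)$.

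The genuine gap is in Step 3, and it is exactly the point you flagged as ``the main obstacle.'' Your compact sets $S_i$ are defined as the image of the particular segment $[\psi_1,\psi_2]$ under $\wv^i$, so $C_f$ depends on $\psi_1,\psi_2$; this does not yield a uniform Lipschitz constant on $\mathcal{K}^0$ and hence does not establish $F^*\in C^{2,1}(\mathcal{K}^0)$. You also need the segment to lie in $\mathcal{K}^0$, which you simply assume. The paper avoids both issues by producing $S_i$ that are \emph{uniform} over $\mathcal{K}^0$: for any $\psi\in\mathcal{K}^0$ one sets $\tilde\psi=\psi-r(\psi)\onevect$, observes $\tilde\psi\in\mathcal{K}^0$, and invokes Lemma~\ref{lem: psi bound} to bound $|\tilde\psi^j-\tilde\psi^k|\le 2\|c\|_\infty$. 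A pigeonhole argument on $\sum_i b_i>1>\sum_i a_i$ then pins down one coordinate of $\tilde\psi$, and hence all of them, by a constant depending only on $c$ and the $f_k$. Since $\tilde\psi^i=f_i'(\wv^i)$, this confines each $\wv^i$ to a fixed compact subinterval of $\interior(\dom f_i)$, on which the local Lipschitz constant of $f_i''$ is finite. That compactness step via Lemma~\ref{lem: psi bound} is the missing idea; once you have it, your bookkeeping in Step~3 goes through without needing any segment to stay inside $\mathcal{K}^0$.
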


\begin{proof}
	Since $f_i$ are proper and convex their domains are non-empty intervals, say $\conj{\dom f_i} = [a^i, b^i]$. Since the $f_i$ are essentially smooth they aren't indicator functions of points, and so $b^i > a^i$. Since the $f_i$ are essentially smooth $\partial f_i(a^i) = \partial f_i(b^i) = \emptyset$ (the derivative here is ``$\infty$'').
	
	Since by assumption the $f_i$ are differentiable on $(a_i, b_i)$, Lemma \ref{lem: subdiff spliting} tells us that $\wv \in \partial F^*(\psi)$ if and only if $\wv^i \in (a_i, b_i)$ and there exists $r \in \R$ so that 
	\begin{equation}\label{eqn: system}
	\begin{split}
	\psi^i &= f_i'(\wv^i) + r \\
	\sum_i \wv^i &= 1 .
	\end{split}
	\end{equation}
	
	Since $F$ was strictly convex we have that $F^*$ is continuously differentiable. Hence for any fixed $\psi$ there precisely one $\wv$ that satisfies the above system (it is $\nabla F^*(\psi)$). Hence (looking at the first equation) there is also precisely one value of $r$ that satisfies the system \eqref{eqn: system}. We this denote by $r(\psi)$. 
	
	Our next step is to apply the implicit function theorem to deduce the differentiability of $\nabla F^*(\psi)$. 
	
	Let $H(\psi, \wv, r) = (f_1'(\wv^1) + r - \psi^1, \dots,  f_N'(\wv^N) + r - \psi^N, \sum_i \wv^i - 1 )$ encode the system in the sense that $H(\psi, \wv, r) = 0$ if and only if $(\psi, \wv, r)$ satisfies \eqref{eqn: system}. Let $J$ be the Jacobian matrix of $H$ with respect to $(\wv, r)$. If we can show that $J$ is invertible then by the implicit function theorem $\nabla F^*(\psi)$ and $r(\psi)$ will be continuously differentiable. Furthermore by the implicit function theorem we will have
	\begin{align*}
	\pat{\psi_j}(\nabla F^*) 
	= - J\i \frac{\partial H}{\partial \psi_j}
	= - J\i (-e_j)
	= J\i e_j
	\end{align*}
	and so $D^2 F^*$ is just $J\i$ with the last row and column removed (these correspond to the $r$ terms). 
	
	We proceed to compute $J$ and its inverse. Direct computation shows that
	\begin{align*}
	J = \begin{bmatrix}
	f_1''(\wv^1)& &  &1 \\
	& \ddots & & \vdots \\
	&  & f_N''(\wv^N) & 1\\
	1 & \cdots &1 &0
	\end{bmatrix}
	\end{align*}
	where empty entries are interpreted to be zero. Let $l^i(\wv) = f_i''(\wv^i)\i$. It is now easy to see that $J\i = L + R$ where $L$ is the diagonal matrix with diagonal entries $l^1, l^2, \dots, l^N, 0$ and 
	\begin{align*}
	R = -Q\begin{bmatrix}
	(l^1)^2 & l^1 l^2 &\cdots & l^1 l^N & -l^1 \\
	l^1 l^2 &(l^2)^2&\cdots & l^2 l^N & -l^2\\
	\vdots & \vdots & \ddots & \vdots & \vdots\\
	l^1 l^N &l^2 l^N&\cdots &(l^N)^2 & -l^N \\
	-l^1 & -l^2 & \cdots & -l^N & 1
	\end{bmatrix}
	\end{align*}
	where $Q = (\sum_i l^i)\i$. Hence $J$ was invertible after all. Note that we needed $f_i''(\wv^i) > 0$. 
	
	At this point we have proved that $F^*$ is $C^2$ and $r(\psi)$ is $C^1$. The only thing left to do is to obtain the Lipschitz estimate on $D^2F^*$. However (as seen in the expression for $J\i$) this strongly relies on the local Lipschitz constant for $f_i''$. Since $f_i''$ is only locally Lipschitz we need to obtain control of the possible values of $\nabla F^*(\psi)$ and assure that they never approach the boundary of the domain of any $f_i$. Thankfully as we will see in Lemma \ref{lem: psi bound} the assumption that no cell collapses ($\psi \in \mathcal{K}^0$) gives us enough compactness in the $\psi$'s.

	Now fix some $\psi \in \mathcal{K}^0$ and let $\wv = \nabla F^*(\psi)$ and $r = r(\psi)$ be the solutions to our system, \eqref{eqn: system}. Set $\ti \psi = \psi - r \ov$. Define $B$ by
	\begin{align*}
	\max_i (b_i - \wv^i) 
	\geq \frac{\sum_i (b_i - \wv^i)}{N}
	= \frac{(\sum_i b_i) - 1}{N}
	=: B.
	\end{align*}
	Now since $F$ is not the indicator function of a point, we have that $\sum_i b_i > 1$ and so $B > 0$.
	Pick $k$ so that $b_k - \wv^k \geq B$ which implies $\wv^k \leq b_k - B$. Then $\ti \psi^k = f_k'(\wv^k) \leq f_k'(b_k-B)$ where we have used the monotonicity of $f_k'$ (recall the $f_k$ are convex). Since $\ti \psi$ and $\psi$ differ by a multiple of $\ov$ we have $G(\ti \psi) = G(\psi)$ and so $\ti \psi \in \mathcal K^0$. Hence by Lemma \ref{lem: psi bound} we get $\ti \psi^i \leq f_k'(b_k-B) + 2 \norm{c}_\infty$ for all $i$. Setting $A := \frac{1 - \sum_i a_i}{N}$, a symmetric argument gives that $\ti \psi^i \geq f_k'(a_k+A) - 2 \norm{c}_\infty$ for all $i$. In particular $\abs{\ti \psi^i} \leq C_1$ where $C_1 = \max_k(-f_k'(a_k+A), f_k'(b_k-B) ) + 2 \norm{c}_\infty$ is a constant that depends only on $c$ and the $f_k$.
	
	Now, recall from the system \eqref{eqn: system} that $\ti \psi^i = f_i'(\wv^i)$. Hence we have obtained that for any $\psi \in \mathcal K^0$, $ \abs{f_i'(\nabla F^*(\psi))} \leq C_1$. Since  $f_i''$ is locally Lipschitz and essentially smooth there is a constant $C_{f_i}$ that depends only on $f_i$ and $C_1$ so that $\abs{f_i''(x) - f_i''(y)} \leq C_{f_i}\abs{x-y}$ whenever $\abs{f_i'(x)}, \abs{f_i'(y)} \leq C_1$. With this we finally get that for any $\psi_1, \psi_2 \in \mathcal K^0$,
	\begin{align*}
	\abs{f_i''(\nabla F^*(\psi_1)^i) - f_i''(\nabla F^*(\psi_2)^i)} \leq C_{f_i}\abs{\nabla F^*(\psi_1)^i - \nabla F^*(\psi_2)^i}
	\end{align*}
	as desired. We may now return to the problem of controlling $D^2 F^*$. 
	
	Since $D^2 F^*$ is just $J\i$ with the last row and column punctured we get, $D^2 F^* = \S + \T$ where $\S, \T$ arise from $L, R$ respectively by removing the last row and column. 
	We are now in a position to obtain the $C^{0,1}$ bound on $D^2F^*$. Fix some $\psi_1, \psi_2 \in \mathcal{K}^0$. Let $\wv_1, \wv_2$ equal $\nabla F^*(\psi_1), \nabla F^*(\psi_2)$ respectively. For $i \in \{1,2\}$ we write $\S_i, \T_i, Q_i, l_i^j$ for the $\S,\T,Q, l^j$ corresponding to $\wv_1, \wv_2$. 
	
	First note that since the $f_i$ are strongly convex with parameter $\eta$, $F$ is strongly convex with parameter $\eta$. Hence $\nabla F^*$ is Lipschitz with constant $\frac{1}{\eta}$. Hence 
	\begin{align*}
	\norm{\wv_1 - \wv_2} 
	= \norm{\nabla F^*(\psi_1) - \nabla F^*(\psi_2)}
	\leq \frac{1}{\eta} \norm{\psi_1 - \psi_2}
	\end{align*}
	and so $\abs{f_i''(\wv_1^i) - f_i''(\wv_2^i)} \leq C_f \eta\i \norm{\psi_1 - \psi_2}$ where $C_f$ is the maximum of the $C_{f_i}$ constants. In particular
	\begin{align*}
	\abs {l_1^i - l_2^i} 
	= \abs {f_i''(\wv_1^i)\i - f_i''(\wv_2^i)\i}
	= \abs { \frac{f_i''(\wv_1^i) - f_i''(\wv_2^i)}{f_i''(\wv_1^i)f_i''(\wv_2^i)} } 
	\leq \eta^{-2} (C_f \eta\i \norm{\psi_1 - \psi_2})
	= C_f \eta^{-3} \norm{\psi_1 - \psi_2}
	\end{align*}
	Now $\norm{D^2F^*(\psi_1) - D^2F^*(\psi_2)} \leq \norm{\S_1-\S_2} + \norm{\T_1 - \T_2}$. The $\S$'s are easy to bound: 
	\begin{align*}
	\norm{\S_1-\S_2} = (\sum_i \abs {l_1^i - l_2^i}^2)^{1/2} \leq C_f \sqrt{N} \eta^{-3} \norm{\psi_1 - \psi_2}.
	\end{align*} 
	For the $\T$'s, consider the functions $g_{ij}: \R^n \to \R, l \mapsto \T_{ij}(l) = Q(l) l^il^j = l^il^j(\sum_k l^k)\i $. We see that $g_{ij}$ is continuously differentiable outside the origin and
	\begin{align*}
	\frac{\pa g_{ij}}{\pa (l^m)} = -l^il^j(\sum_k l^k)^{-2} + (\delta_{mi} l^j + \delta_{mj} l^i)(\sum_k l^k)^{-1}
	\end{align*} 
	In particular since $l^i(\sum_k l^k)^{-1} \leq 1$ and $l^j(\sum_k l^k)^{-1} \leq 1$ we have that $\abs{\frac{\pa g_{ij}}{\pa (l^m)}} \leq 3$ and so $g_{ij}$ is Lipschitz with constant $3\sqrt{N}$. Hence we get
	\begin{align*}
	\norm{\T_1 - \T_2} 
	= \(\sum_{i,j} (g_{ij}(l_1)- g_{ij}(l_2))^2 \)^{1/2}
	\leq 3\sqrt{N}\(\sum_{i,j} \norm{l_1- l_2}^2 \)^{1/2}
	\leq 3N^{2} C_f \eta^{-3} \norm{\psi_1 - \psi_2} 
	\end{align*}
	Putting it together we get $\norm{D^2F^*(\psi_1) - D^2F^*(\psi_2)} \leq 4N^{2} C_f \eta^{-3} \norm{\psi_1 - \psi_2}$. 
\end{proof}

Using our explicit expression for $D^2F^*$, we prove a quick corollary which gives invertibility of $D^2F^*$ except in the direction of $\onevect$.

\begin{cor}\label{cor: splitting invertibility}
Let $F$ satisfy the assumptions of Theorem \ref{thm: smoothness of F*}. Then for all $\psi \in \R^N$ we have that $\ker D^2F^*(\psi) = \spn{\onevect}$.
\end{cor}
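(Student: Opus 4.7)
The plan is to exploit the explicit formula for $D^2 F^*$ derived in the proof of Theorem \ref{thm: smoothness of F*}. Recall that $D^2 F^*$ is obtained by deleting the last row and column of $J^{-1} = L + R$, so for $i,j \in \{1,\ldots,N\}$ we have
\begin{align*}
(D^2 F^*(\psi))_{ij} = l^i \delta_{ij} - Q\, l^i l^j,
\end{align*}
where $l^i = 1/f_i''(\wv^i)$, $\wv = \nabla F^*(\psi)$, and $Q = (\sum_k l^k)^{-1}$. The strong convexity hypothesis $f_i'' \geq \eta > 0$ guarantees that each $l^i$ is finite and strictly positive, so the formula is well-defined and $Q > 0$.

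First I would verify the easy inclusion $\spn\{\onevect\} \subset \ker D^2F^*(\psi)$. Computing directly,
\begin{align*}
(D^2 F^*(\psi)\,\onevect)_i = \sum_{j=1}^N \bigl(l^i \delta_{ij} - Q\,l^i l^j\bigr) = l^i - Q\, l^i \sum_{j=1}^N l^j = l^i - l^i = 0,
\end{align*}
which handles one direction.

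Next I would establish the reverse inclusion. Let $v \in \ker D^2 F^*(\psi)$. Setting $s := Q \sum_j l^j v^j$ (a scalar independent of $i$), the equation $D^2 F^*(\psi) v = 0$ reads
\begin{align*}
l^i \bigl(v^i - s\bigr) = 0, \qquad i = 1, \ldots, N.
\end{align*}
Since $l^i > 0$ for every $i$, this forces $v^i = s$ for all $i$, i.e.\ $v = s\,\onevect \in \spn\{\onevect\}$.

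There is really no obstacle here: the only ingredient beyond the explicit formula is the strict positivity of the $l^i$, which is already packaged into the hypotheses of Theorem \ref{thm: smoothness of F*}. Combining the two inclusions gives $\ker D^2 F^*(\psi) = \spn\{\onevect\}$, as claimed.
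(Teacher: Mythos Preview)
Your proof is correct and follows essentially the same approach as the paper: both use the explicit formula $(D^2 F^*(\psi))_{ij} = l^i\delta_{ij} - Q\,l^i l^j$ from Theorem~\ref{thm: smoothness of F*} to reduce $D^2F^*(\psi)v = 0$ to $l^i(v^i - s) = 0$ with $s = Q\sum_j l^j v^j$, and then invoke $l^i > 0$ to conclude $v^i = s$ for all $i$. The only cosmetic difference is that you verify the two inclusions separately, whereas the paper phrases it as a single if-and-only-if.
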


\begin{proof}
	
Fix some $\psi \in \R^N$ and let $v \in \R^N$ be some vector where $v = \sum_i v^i e_i$ for some $v_i \in \R$. We recall from the proof of Theorem \ref{thm: smoothness of F*} that we can split $D^2F^*(\psi)$ into the sum of two matrices $S,T$ where $S$ is the diagonal matrix with elements $l^1, l^2, \dots, l^N$ and
\begin{align*}
T = -(\sum_i l^i)\i\begin{bmatrix}
(l^1)^2 & l^1 l^2 &\cdots & l^1 l^N  \\
l^1 l^2 &(l^2)^2&\cdots & l^2 l^N \\
\vdots & \vdots & \ddots & \vdots \\
l^1 l^N &l^2 l^N&\cdots &(l^N)^2 
\end{bmatrix}
\end{align*}
where $l^i = f_i'' (\nabla F^*(\psi)^i)\i > 0$. Hence
\begin{align*}
((S+T)v)^j = l^j v^j -  \frac{\sum_k l^jl^k v^k}{\sum_i l^i} = l^j \(v^j - \frac{\sum_k l^k v^k}{\sum_i l^i} \).
\end{align*}
Since each $l^j \neq 0$ we see that $v \in \ker D^2F^*(\psi)$ if and only if for all $j \in \{1, \dots, N\}$ we have that $v^j = \frac{\sum_k l^k v^k}{\sum_i l^i}$ which occurs if and only if all of the $v^j$ are equal, i.e. $v \in \spn{\onevect}$.
\end{proof}

Finally we apply Theorem \ref{thm: smoothness of F*} to prove our main convergence result. 

\begin{proof}[Proof of Theorem \ref{thm: Newton Convergence}]
	
We need to verify all of the conditions of Proposition \ref{prop: Newton Convergence} are satisfied. First note that by Theorem \ref{thm: smoothness of F*} we have that $F^* \in C^{2,1}(\mathcal{K}^0)$. 

Next since $\dom f_i \subset [\eps, 1]$ we see that $\dom F \subset [\eps, 1]^N$. Hence by \cite[Corollary 23.5.1]{Rockafellar70} we have that $\range \partial F^* \subset [\eps, 1]^N$. In particular $(\nabla F^*(\psi))^i \geq \eps$ for $\psi \in \mathcal{K}^0$. 

Next by Corollary \ref{cor: splitting invertibility} for every $\psi \in \R^N$ there is some $\kappa(\psi) > 0$ so that $D^2 F^*(\psi) \geq \kappa(\psi) H$ where $H$ is the orthogonal projection onto the hyperplane perpendicular to $\onevect$ (denoted $\ti P$). By Lemma \ref{lem: psi bound} we have that $\conj {\mathcal{K}^0} \cap \ti P$ is a compact set so we can choose a uniform $\kappa > 0$ so that $D^2 F^*(\psi) \geq \kappa H$ for all $\psi \in \conj {\mathcal{K}^0} \cap \ti P$. Since $D^2 F^*(\psi) = D^2 F^*(H(\psi))$ we get $D^2 F^*(\psi) \geq \kappa H$ for all $\psi \in \conj {\mathcal{K}^0}$.

Finally since $F$ splits by Corollary \ref{cor: splitting} we have that
\begin{align*}
\frac{\partial^2 F^*}{{(\partial \psi^i)}^2} = \sum_{j \neq i} \abs{\frac{\partial^2 F^*}{\partial \psi^i \partial \psi^j}}
\end{align*}
on all of $\R^N$. Hence all of the assumptions of Proposition \ref{prop: Newton Convergence} are indeed satisfied.	
\end{proof}

\section{Stability}\label{sec: Stability}

We now begin working towards the proof of our second main theorem, Theorem \ref{thm: regularize}. In this section we will prove that the optimizing weight vector is stable under perturbations in the storage fee functions. We obtain results both in terms of $L^\infty$ perturbations and in terms of perturbations of the domain. 

To start we set some notation. Let
\begin{align*}
\mathcal{C}(\ti\weightvect) = \min_{S_\#\mu =\sum_{i=1}^N \tilde\weightvect^i \delta_{y_i}} \int c(x, S(x)) d\mu = \sup_{\psi\in \R^N} \(-\int \psi^{c^*} d\mu - \inner{\psi}{\ti \weightvect}\).
\end{align*}
The equality between the minimization and maximization in the above definition is just the classical Kantorovich duality and so the supremum is actually obtained (see \cite[Theorem 5.10]{Villani09}). 
Furthermore, for any storage fee function $F$ let $\mathcal{O}_F(\ti\weightvect) = \mathcal{C}(\ti \wv) + F(\ti \wv)$. 

\begin{lem}\label{lem: strong convex}

Let $F_1$ be a storage fee function and $\wv_1$ be a minimizer in the associated problem. Then for any $\wv \in \R^N$
\begin{align*}
\norm{\wv - \wv_1}^2 \leq {8C_LN}(\mathcal{O}_{F_1}(\wv) - \mathcal{O}_{F_1}(\wv_1))
\end{align*}
where $C_L$ is the constant described in \cite[Lemma A.1]{BansilKitagawa20b}. 

\end{lem}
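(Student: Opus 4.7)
The plan is to exploit the decomposition $\mathcal{O}_{F_1}(\ti\wv) = \mathcal{C}(\ti\wv) + F_1(\ti\wv)$: since $\wv_1$ is the minimizer, a strong-convexity lower bound on $\mathcal{C}$, combined with the convexity of $F_1$ and first-order optimality at $\wv_1$, should upgrade the trivial inequality $\mathcal{O}_{F_1}(\wv) \geq \mathcal{O}_{F_1}(\wv_1)$ into the claimed quadratic bound.

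First I would pick $\psi_1 \in \R^N$ attaining the supremum in $\mathcal{C}(\wv_1) = \sup_\psi (-\int \psi^{c^*} \dmu - \inner{\psi}{\wv_1})$; existence of such a maximizer is furnished by \cite[Theorem 5.10]{Villani09}. Since this supremum presents $\mathcal{C}$ as a Legendre-type transform, $-\psi_1$ lies in $\partial \mathcal{C}(\wv_1)$, and because $\wv_1$ minimizes $\mathcal{C} + F_1$, first-order optimality forces $\psi_1 \in \partial F_1(\wv_1)$. Convexity of $F_1$ then yields
\begin{align*}
F_1(\wv) \geq F_1(\wv_1) + \inner{\psi_1}{\wv - \wv_1}
\end{align*}
for every $\wv \in \R^N$, which handles the $F_1$ half of the decomposition without any strong convexity assumption.

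The key step is the matching quadratic lower bound
\begin{align*}
\mathcal{C}(\wv) \geq \mathcal{C}(\wv_1) - \inner{\psi_1}{\wv - \wv_1} + \frac{1}{8C_L N}\norm{\wv - \wv_1}^2,
\end{align*}
which I would derive from \cite[Lemma A.1]{BansilKitagawa20b}. Summing this with the subgradient inequality for $F_1$ exactly cancels the $\psi_1$ cross-terms and gives $\mathcal{O}_{F_1}(\wv) - \mathcal{O}_{F_1}(\wv_1) \geq \frac{1}{8C_L N} \norm{\wv - \wv_1}^2$, which rearranges into the lemma. The main obstacle is converting the cited lemma into this quadratic form with the stated constant: if \cite[Lemma A.1]{BansilKitagawa20b} provides a one-sided Lipschitz-type estimate between weight vectors and optimal dual potentials rather than an outright strong-convexity bound on $\mathcal{C}$, I would integrate it along the segment from $\wv_1$ to $\wv$, using that the optimal potential at each point is a selection from $-\partial \mathcal{C}$. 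The factor $N$ is most plausibly a conversion between $\ell^2$ and $\ell^1$ or $\ell^\infty$ norms in which the respective estimates are naturally stated, and the constant $8$ would be slack absorbed in that conversion.
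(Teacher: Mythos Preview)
Your approach is correct and essentially matches the paper's: the paper simply cites that \cite[Lemma A.1]{BansilKitagawa20b} already asserts $\mathcal{C}$ is strongly convex with constant $\frac{1}{4C_LN}$, so $\mathcal{O}_{F_1}=\mathcal{C}+F_1$ inherits that strong convexity, and then invokes the standard quadratic-growth-at-a-minimum result \cite[Corollary 3.2.3]{Nesterov2018}. Your subgradient manipulation is just an explicit unpacking of that last step, and your speculation about having to integrate a Lipschitz-type estimate is unnecessary since the cited lemma provides the strong convexity outright.
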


\begin{proof}
	
Recall from \cite[Lemma A.1]{BansilKitagawa20b} that $\mathcal C$ is strongly convex with constant $\frac{1}{4C_L N}$. Since $F_1$ is convex, $\mathcal{O}_{F_1}$ is also strongly convex with the same constant as $\mathcal C$. The result now follows from \cite[Corollary 3.2.3]{Nesterov2018}. 	
\end{proof}

We now use the strong convexity of $\mathcal{C}$ to obtain stability under $L^\infty$ perturbations of $F$. 

\begin{prop}\label{prop: uniform stab}

Let $F_1, F_2$ be storage fee functions such that $\dom(F_1) = \dom(F_2)$. Let $\wv_1, \wv_2$ be the minimizers of problems associated to $F_1, F_2$ respectively. Then $\norm{\wv_1- \wv_2} \leq 4 \sqrt{C_L N \norm{F_1-F_2}_\infty}$.

\end{prop}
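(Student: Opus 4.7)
The plan is a standard ``three-term split'' argument combining the two minimizer properties with the strong convexity of the transport cost that was already isolated in Lemma \ref{lem: strong convex}. Since $\dom(F_1)=\dom(F_2)$, both $\wv_1$ and $\wv_2$ lie in the common domain, so every value $F_j(\wv_k)$ is finite and the quantity $\|F_1-F_2\|_\infty$ controls all of them.

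First I would apply Lemma \ref{lem: strong convex} with $F_1$ playing the role of the reference storage fee and $\wv=\wv_2$, which immediately yields
\begin{align*}
\norm{\wv_2-\wv_1}^2 \leq 8C_L N\bigl(\mathcal{O}_{F_1}(\wv_2)-\mathcal{O}_{F_1}(\wv_1)\bigr).
\end{align*}
The whole job then reduces to bounding the right-hand side by $2\|F_1-F_2\|_\infty$.

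For this, I would write the telescoping identity
\begin{align*}
\mathcal{O}_{F_1}(\wv_2)-\mathcal{O}_{F_1}(\wv_1)
=\bigl[\mathcal{O}_{F_1}(\wv_2)-\mathcal{O}_{F_2}(\wv_2)\bigr]
+\bigl[\mathcal{O}_{F_2}(\wv_2)-\mathcal{O}_{F_2}(\wv_1)\bigr]
+\bigl[\mathcal{O}_{F_2}(\wv_1)-\mathcal{O}_{F_1}(\wv_1)\bigr].
\end{align*}
The $\mathcal{C}$ contributions cancel in the first and third brackets because $\mathcal{O}_F=\mathcal{C}+F$, so each of those brackets equals $F_1(\wv_k)-F_2(\wv_k)$ for the appropriate $k$, and is therefore bounded by $\|F_1-F_2\|_\infty$. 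The middle bracket is $\leq 0$ since $\wv_2$ is by hypothesis a minimizer of $\mathcal{O}_{F_2}$. Combining, $\mathcal{O}_{F_1}(\wv_2)-\mathcal{O}_{F_1}(\wv_1) \leq 2\|F_1-F_2\|_\infty$, and plugging into the strong convexity estimate and taking square roots yields the claim with the constant $\sqrt{16}=4$.

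There is essentially no obstacle: the only subtlety is making sure both $\wv_j$ are admissible test points for both functionals so that the telescoping is legitimate, which follows from the equality of domains. The result is thus a short corollary of Lemma \ref{lem: strong convex} together with the two minimization properties.
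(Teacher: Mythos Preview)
Your argument is correct and is essentially the same as the paper's: both apply Lemma \ref{lem: strong convex} with $F_1$ and then bound $\mathcal{O}_{F_1}(\wv_2)-\mathcal{O}_{F_1}(\wv_1)$ by $2\|F_1-F_2\|_\infty$ via the minimality of $\wv_2$ for $\mathcal{O}_{F_2}$ and two $\pm(F_1-F_2)$ terms. The paper writes this as a single chain of (in)equalities rather than an explicit three-term telescoping, but the content is identical.
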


\begin{proof}

We have
\begin{align*}
\mathcal{O}_{F_1}(\wv_2) 
&= \mathcal{C}(\wv_2) + F_1(\wv_2) \\
&= \mathcal{O}_{F_2}(\wv_2) + F_1(\wv_2) - F_2(\wv_2) \\
&\leq \mathcal{O}_{F_2}(\wv_1) + F_1(\wv_2) - F_2(\wv_2) \\
&= \mathcal{O}_{F_1}(\wv_1) + F_2(\wv_1) - F_1(\wv_1) + F_1(\wv_2) - F_2(\wv_2).
\end{align*}
Hence $\mathcal{O}_{F_1}(\wv_2) - \mathcal{O}_{F_1}(\wv_1) \leq 2\norm{F_1-F_2}_\infty$ and so the result follows from the Lemma \ref{lem: strong convex}.  
\end{proof}

Finally we show stability under changing the domain of $F$. In order to quantitatively measure perturbations of the domain of $F$, we use Hausdorff distance which we denote with $d_\H$. 

\begin{prop}\label{prop: hausdorff stab}
	
	Let $F_1, F_2$ be storage fee functions such that $\dom(F_1) \subset \dom(F_2)$ and $F_1 = F_2$ on $\dom(F_1)$. Furthermore assume that $F_2$ is uniformly continuous on its domain with modulus of continuity $\omega$. Let $\wv_1, \wv_2$ be the minimizers of problems associated to $F_1, F_2$ respectively. Then	
	\begin{align*}
	\norm{\wv_1- \wv_2}^2 \leq 8C_LN (2 \norm{c}_\infty \sqrt{N} d_\H(\dom(F_2) , \dom(F_1)) +  \omega(d_\H(\dom(F_2) , \dom(F_1)))).
	\end{align*} 
\end{prop}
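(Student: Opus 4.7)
The plan is to adapt the strong-convexity argument of Proposition \ref{prop: uniform stab}: apply Lemma \ref{lem: strong convex} to $F_2$ at the test point $\wv_1$, which yields
\begin{align*}
\norm{\wv_1 - \wv_2}^2 \leq 8 C_L N (\mathcal{O}_{F_2}(\wv_1) - \mathcal{O}_{F_2}(\wv_2)),
\end{align*}
and then bound the right-hand side by $2\norm{c}_\infty \sqrt{N} d_\H(\dom F_2, \dom F_1) + \omega(d_\H(\dom F_2, \dom F_1))$. The difficulty compared with Proposition \ref{prop: uniform stab} is that $F_1$ and $F_2$ no longer agree everywhere --- $F_1$ equals $+\infty$ on $\dom F_2 \setminus \dom F_1$ --- so the direct algebraic cancellation used there fails. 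My fix is to project $\wv_2$ into $\dom F_1$ along the Hausdorff-distance gap before comparing.

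Since $\wv_1 \in \dom F_1 \subset \dom F_2$ and $F_1 = F_2$ there, $\mathcal{O}_{F_2}(\wv_1) = \mathcal{O}_{F_1}(\wv_1) = \min \mathcal{O}_{F_1}$. For any $\delta > 0$, the inclusion $\dom F_1 \subset \dom F_2$ forces $d_\H(\dom F_2, \dom F_1) = \sup_{w \in \dom F_2} d(w, \dom F_1)$, so I may pick $\ti\wv \in \dom F_1$ with $\norm{\ti\wv - \wv_2} \leq d_\H(\dom F_2, \dom F_1) + \delta$. Using minimality of $\wv_1$ for $\mathcal{O}_{F_1}$ together with $F_1(\ti\wv) = F_2(\ti\wv)$,
\begin{align*}
\mathcal{O}_{F_2}(\wv_1) - \mathcal{O}_{F_2}(\wv_2) \leq \mathcal{O}_{F_1}(\ti\wv) - \mathcal{O}_{F_2}(\wv_2) = [\mathcal C(\ti\wv) - \mathcal C(\wv_2)] + [F_2(\ti\wv) - F_2(\wv_2)].
\end{align*}
The $F_2$-term is immediately at most $\omega(d_\H(\dom F_2, \dom F_1) + \delta)$ by uniform continuity.

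For the $\mathcal C$-term I would use Kantorovich duality: picking a dual optimizer $\psi^{**}$ for $\mathcal C(\ti\wv)$ and testing it in the dual problem for $\mathcal C(\wv_2)$ yields $\mathcal C(\ti\wv) - \mathcal C(\wv_2) \leq \inner{\psi^{**}}{\wv_2 - \ti\wv}$. Because both $\wv_2, \ti\wv \in \wvs$ sum to one, this inner product is invariant under the shift $\psi^{**} \mapsto \psi^{**} + t\onevect$, so I may normalize $\psi^{**}$ to have mean zero. The decisive estimate will be the uniform oscillation bound $\max_i \psi^{**,i} - \min_i \psi^{**,i} \leq 2\norm{c}_\infty$: this is the classical-OT analog of Lemma \ref{lem: psi bound}, proved by the cell-membership comparison $\psi^{**,i} - \psi^{**,j} \leq c(x, y_j) - c(x, y_i)$ at any $x \in \Lag_i(\psi^{**})$, with indices having $\ti\wv^i = 0$ carrying free components that can be placed inside this range. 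After the mean-zero normalization, Cauchy--Schwarz gives $|\inner{\psi^{**}}{\wv_2 - \ti\wv}| \leq 2\sqrt N \norm{c}_\infty (d_\H(\dom F_2, \dom F_1) + \delta)$. Combining the two bracketed estimates, substituting into the strong-convexity inequality, and sending $\delta \to 0$ using continuity of $\omega$ at $0$ closes the argument; the main obstacle I anticipate is precisely this oscillation bound on $\psi^{**}$.
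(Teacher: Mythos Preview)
Your proposal is correct and follows essentially the same route as the paper: project $\wv_2$ into $\dom F_1$, control the $F_2$-difference by the modulus of continuity, control the $\mathcal C$-difference via a normalized dual maximizer and the oscillation bound $\max_i\psi^i-\min_i\psi^i\le 2\norm{c}_\infty$ (the paper packages this as choosing $\ti\psi_2\in\conj{\mathcal K^0}$ with $\sum_j\ti\psi_2^j=0$ and invoking Lemma~\ref{lem: psi bound}), and then apply Lemma~\ref{lem: strong convex}. Your $\delta$-approximation is in fact slightly more careful than the paper, which tacitly assumes the Hausdorff distance from $\wv_2$ to $\dom F_1$ is attained; one small slip is that you need right-continuity of $\omega$ at $d_\H(\dom F_2,\dom F_1)$ rather than at $0$ when sending $\delta\to 0$, but this is harmless since one may always pass to a continuous modulus.
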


\begin{proof}
	
Choose $\ti \wv_2 \in \dom F_1$ so that $\norm{\wv_2- \ti \wv_2} \leq d_\H(\dom(F_2) , \dom(F_1))$. Since $F_2$ is uniformly continuous we have $F_2(\ti \wv_2) \leq F_2(\wv_2) + \omega(d_\H(\dom(F_2) , \dom(F_1)))$. 

Now let $\ti \psi_2$ be a maximizer in the dual problem for $\mathcal{C}(\ti \wv_2)$ so that $\ti \psi_2 \in \conj {\mathcal{K}^0}$ and $\sum_j \ti \psi_2^j = 0$. We see that
\begin{align*}
\mathcal{C}(\ti \wv_2)
&= \int (\ti \psi_2)^{c^*} \dmu + \inner{\ti \psi_2}{\ti \wv_2} \\
&= \int (\ti \psi_2)^{c^*} \dmu + \inner{\ti \psi_2}{\wv_2} + \inner{\ti \psi_2}{\ti \wv_2 - \wv_2} \\
&\leq \sup_{\psi} \(\int \psi^{c^*} \dmu + \inner{\psi}{\wv_2} \) + \inner{\ti \psi_2}{\ti \wv_2 - \wv_2} \\
&= \mathcal{C}(\wv_2) + \inner{\ti \psi_2}{\ti \wv_2 - \wv_2} \\
&\leq \mathcal{C}(\wv_2) + \norm{\ti \psi_2} \cdot \norm{\ti \wv_2 - \wv_2} \\
&\leq \mathcal{C}(\wv_2) + 2 \norm{c}_\infty \sqrt{N} d_\H(\dom(F_2) , \dom(F_1)),
\end{align*}
where the final inequality follows from Lemma \ref{lem: psi bound}. Hence	
\begin{align*}
O_{F_2}(\ti \wv_2)
&= \mathcal{C}(\ti \wv_2) + F_2(\ti \wv_2) \\
&\leq \mathcal{C}(\wv_2) + 2 \norm{c}_\infty \sqrt{N} d_\H(\dom(F_2) , \dom(F_1)) + F_2(\wv_2) + \omega(d_\H(\dom(F_2) , \dom(F_1))) \\
&= O_{F_2}(\wv_2) + 2 \norm{c}_\infty \sqrt{N} d_\H(\dom(F_2) , \dom(F_1)) +  \omega(d_\H(\dom(F_2) , \dom(F_1))).
\end{align*}
Next since $\ti \wv_2 \in \dom F_1$ we have $F_1(\ti \wv_2) = F_2(\ti \wv_2)$ and so $O_{F_2}(\ti \wv_2) = O_{F_1}(\ti \wv_2)$. But by the definition of $\wv_1$ we have $O_{F_1}(\ti \wv_2) \geq O_{F_1}(\wv_1)$. Next note that pointwise $F_1 \geq F_2$. Hence, $O_{F_1}(\wv_1) \geq O_{F_2}(\wv_1)$. All together we have $O_{F_2}(\ti \wv_2) \geq O_{F_2}(\wv_1)$. Now the above equation becomes:
\begin{align*}
2 \norm{c}_\infty \sqrt{N} d_\H(\dom(F_2) , \dom(F_1)) +  \omega(d_\H(\dom(F_2) , \dom(F_1)))
&\geq O_{F_2}(\ti \wv_2) - O_{F_2}(\wv_2) \\
&\geq O_{F_2}(\wv_1) - O_{F_2}(\wv_2). 
\end{align*}
Hence by Lemma \ref{lem: strong convex} we get 
\begin{align*}
\norm{\wv_1- \wv_2}^2 \leq 8C_LN (2 \norm{c}_\infty \sqrt{N} d_\H(\dom(F_2) , \dom(F_1)) +  \omega(d_\H(\dom(F_2) , \dom(F_1))))
\end{align*}
as desired.
\end{proof}

\begin{rmk}
	
Note that the assumption that $F_2$ is uniformly continuous on its domain poses no real added assumption, see Proposition \ref{prop: uniform continuous}. 
	
\end{rmk}

\section{Regularizations}\label{sec: Regularizations}

In this section we prove our second main theorem, Theorem \ref{thm: regularize}, which shows that for any storage fee function, $F$ that splits there is a storage fee function that satisfies the assumptions of our main convergence theorem and yields an optimizer close to that of $F$. 

\begin{prop}\label{prop: uniform continuous}
	
Let $F_1$ be a storage fee function. Define the storage fee function $F_2$ by
\begin{align*}
F_2 = F_1 + \delta_{\{\wv: F_1(\wv) \leq 2 \norm{c}_\infty + \min_{\ti \wv} F_1(\ti \wv)\} }.
\end{align*}
Then $F_2$ is uniformly continuous on its domain and if $\wv_1, \wv_2$ are the minimizers of problems associated to $F_1, F_2$ respectively then $\wv_1 = \wv_2$. 

\end{prop}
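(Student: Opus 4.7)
The plan is to handle the two conclusions separately. The equality $\wv_1 = \wv_2$ is a short convex-optimization argument showing that the added indicator constraint is inactive at the $F_1$-minimizer, while the uniform continuity requires analyzing a convex function that is bounded on a compact sublevel set.

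First I would establish the equality. Since $c$ is bounded on $X\times Y$, one has $|\mathcal{C}(\wv)-\mathcal{C}(\ti\wv)|\le 2\norm{c}_\infty$ for every admissible pair $\wv,\ti\wv$. Picking $\wv^\star$ to be any minimizer of $F_1$ and using the optimality of $\wv_1$ for $\mathcal{O}_{F_1}$ gives
\begin{align*}
F_1(\wv_1)\le F_1(\wv^\star)+\mathcal{C}(\wv^\star)-\mathcal{C}(\wv_1)\le \min F_1+2\norm{c}_\infty,
\end{align*}
so $\wv_1\in\dom F_2$, and hence $\mathcal{O}_{F_2}(\wv_1)=\mathcal{O}_{F_1}(\wv_1)$. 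Combining this with $F_2\ge F_1$ pointwise and the minimality of $\wv_2$ yields
\begin{align*}
\mathcal{O}_{F_1}(\wv_1)=\mathcal{O}_{F_2}(\wv_1)\ge \mathcal{O}_{F_2}(\wv_2)\ge \mathcal{O}_{F_1}(\wv_2)\ge \mathcal{O}_{F_1}(\wv_1),
\end{align*}
so $\wv_2$ is also a minimizer of $\mathcal{O}_{F_1}$. Applying Lemma \ref{lem: strong convex} to $\wv_2$ then gives $\norm{\wv_1-\wv_2}^2\le 8C_LN(\mathcal{O}_{F_1}(\wv_2)-\mathcal{O}_{F_1}(\wv_1))=0$.

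For the uniform continuity I would first observe that $\dom F_2=L:=\{\wv:F_1(\wv)\le 2\norm{c}_\infty+\min F_1\}$ is a sublevel set of a proper lsc convex function contained in the compact simplex $\wvs$, hence $L$ is compact and convex; moreover $F_2=F_1$ is bounded on $L$ by $M:=\min F_1+2\norm{c}_\infty$. I would then invoke the classical convex-analysis fact that a convex function bounded above on a convex set is locally Lipschitz on its relative interior, which supplies a modulus on $\ri L$ that becomes uniform on any compact subset of $\ri L$. The main obstacle will be extending this modulus to boundary points of $L$: a closed convex function bounded on its compact domain is not in general continuous at the boundary, so I would exploit the structural fact that $L$ is itself a sublevel set at height $M$ together with the lsc of $F_1$ to force continuity along radial approach from a fixed interior reference point (e.g.\ an element of $\ri L$), thereby propagating the modulus from $\ri L$ to all of $L$.
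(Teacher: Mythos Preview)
Your argument for $\wv_1=\wv_2$ is correct and is essentially the paper's: both show $F_1(\wv_1)\le \min F_1+2\norm{c}_\infty$ from the bound $\lvert\mathcal C\rvert\le\norm{c}_\infty$, deduce $\wv_1\in\dom F_2$, and then combine $F_2\ge F_1$ with optimality to conclude that $\wv_1$ minimizes $\mathcal O_{F_2}$. Your extra appeal to Lemma~\ref{lem: strong convex} for uniqueness is a harmless addition.

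For the uniform continuity you correctly flag a genuine difficulty that the paper glosses over (it simply asserts ``convex functions are continuous on their domain,'' which is false for closed convex functions in dimension $\ge 2$). However, your proposed repair does not close the gap. Radial continuity from a fixed $\wv_0\in\ri L$ does hold---restricting to a segment gives a closed convex function on a compact interval, which is continuous---but this does \emph{not} propagate to continuity on $L$, and in fact the conclusion fails for general storage fee functions. For $N=3$, parametrize $\wvs$ by $(u,v)=(\wv^1,\wv^2)$ and set $F_1(\wv)=u^2/v$ for $v>0$, $F_1(0,0,1)=0$, and $+\infty$ otherwise; this is proper, closed, convex with $\dom F_1\subset\wvs$. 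With $M=2\norm{c}_\infty>0$ the sublevel set $L=\{F_1\le M\}$ contains $(0,0,1)$ (value $0$) and, for small $t>0$, the points $\wv_t=(t,\,t^2/M,\,1-t-t^2/M)$ (value $M$); as $t\to 0$ we have $\wv_t\to(0,0,1)$ in $L$ while $F_2(\wv_t)=M\not\to 0=F_2(0,0,1)$. Thus $F_2$ is not even continuous on its domain, so no radial-approach argument can succeed. The paper's downstream application (Corollary~\ref{cor: uni cont split}) only asserts continuity of the one-dimensional pieces $f_{i,2}$, where the boundary argument is valid; the general statement you are attempting to prove, as written, is not.
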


\begin{proof}
	
 Note that since $\dom F_2 \subset \wvs$ and so $\dom F_2$ is bounded. Furthermore note that $F_2 \leq 2 \norm{c}_\infty + \min_{\ti \wv} F_1(\ti \wv)$ on its domain and so $F_2$ is bounded on its domain. Since $F_2$ is a closed, convex function that is bounded on its domain we see that $\dom F_2$ is closed. Hence $\dom F_2$ is compact. Since convex functions are continuous on their domain this shows that $F_2$ is uniformly continuous on its domain.

Next we need to show $\wv_1 = \wv_2$. First we will show that $F_1(\wv_1) \leq 2 \norm{c}_\infty + \min_{\ti \wv} F_1(\ti \wv)$. Note that for any $\ti \wv \in \wvs$
\begin{align*}
\abs{\mathcal{C}(\ti \weightvect) }
= \abs{\min_{S_\#\mu =\sum_{i=1}^N \tilde\weightvect^i \delta_{y_i}} \int c(x, S(x)) d\mu} 
\leq \min_{S_\#\mu =\sum_{i=1}^N \tilde\weightvect^i \delta_{y_i}} \int \norm{c}_\infty d\mu
= \norm{c}_\infty.
\end{align*}
Hence for any $\ti \wv \in \wvs$
\begin{align*}
F_1(\wv_1) 
= O_{F_1}(\wv_1) - \mathcal{C}(\wv_1) 
\leq O_{F_1}(\ti \wv) + \norm{c}_\infty
= F_1(\ti \wv) + \mathcal{C}(\ti \wv)  + \norm{c}_\infty
\leq 2 \norm{c}_\infty + F_1(\ti \wv),
\end{align*}
and so minimizing over $\ti \wv \in \wvs$ gives $F_1(\wv_1) \leq 2 \norm{c}_\infty + \min_{\ti \wv} F_1(\ti \wv)$. Hence $F_2(\wv_1) = F_1(\wv_1)$ and $O_{F_2}(\wv_1) = O_{F_1}(\wv_1)$. Since $F_2 \geq F_1$ pointwise we get for any $\ti \wv \in \wvs$, 
\begin{align*}
O_{F_2}(\ti \wv) \geq O_{F_1}(\ti \wv) \geq O_{F_1}(\wv_1) = O_{F_2}(\wv_1),
\end{align*}
and so $\wv_1$ is indeed the minimizer of the problem associated to $F_2$ and so $\wv_1 = \wv_2$. 
\end{proof}

\begin{cor}	\label{cor: uni cont split}
Suppose $F_1$ is a storage fee function that splits into functions $f_{i, 1}$. Define $f_{i,2}$ by
\begin{align*}
f_{i,2} = f_{i, 1} + \delta_{\{x: f_{i,1}(x) \leq 2 \norm{c}_\infty + \min_{\ti \wv} F_1(\ti \wv) - \sum_{j\neq i} \min_x f_{j,1}(x)\} }.
\end{align*}
Then the $f_{i,2}$ are proper, convex functions that are uniformly continuous on their domains. Furthermore if we define the storage fee function $F_2(\wv) = \sum_i f_{i,2}(\wv^i) + \delta_\wvs(\wv)$, then the minimizers of problems associated to $F_1, F_2$ are equal.
\end{cor}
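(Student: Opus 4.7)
The plan is to reduce this corollary directly to Proposition \ref{prop: uniform continuous} by exploiting the splitting structure. The key observation is that the coordinate-wise truncations defining the $f_{i,2}$ are arranged so that $F_2$ sits between $F_1$ and the storage fee function constructed by applying Proposition \ref{prop: uniform continuous} to $F_1$, while still agreeing with $F_1$ at the minimizer.

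First I would verify that each $f_{i,2}$ is proper, closed, convex, and uniformly continuous on its domain. Since $f_{i,1}$ is a proper closed convex function, its minimum $\min_x f_{i,1}(x)$ is attained and finite, so the constant $C_i := 2\norm{c}_\infty + \min_{\ti \wv} F_1(\ti \wv) - \sum_{j\neq i} \min_x f_{j,1}(x)$ is finite. Then $\dom f_{i,2} = \dom f_{i,1} \cap \{x : f_{i,1}(x) \leq C_i\}$ is the intersection of two closed sets, contained in $[0,1]$, hence compact. By the coordinate-wise argument in the next paragraph, this set is nonempty, so $f_{i,2}$ is proper. On its compact domain $f_{i,2} = f_{i,1}$ is bounded above by $C_i$ and below by $\min_x f_{i,1}(x)$, so being a bounded closed convex function on a compact real interval, $f_{i,2}$ is continuous there, hence uniformly continuous.

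Next I would show that the minimizer $\wv_1$ of the problem associated to $F_1$ lies in $\prod_i \dom f_{i,2}$, which gives $F_2(\wv_1) = F_1(\wv_1)$. The proof of Proposition \ref{prop: uniform continuous} established $F_1(\wv_1) \leq 2\norm{c}_\infty + \min_{\ti \wv} F_1(\ti \wv)$, i.e.,
\begin{align*}
\sum_i f_{i,1}(\wv_1^i) \leq 2\norm{c}_\infty + \min_{\ti \wv} F_1(\ti \wv).
\end{align*}
Using $f_{j,1}(\wv_1^j) \geq \min_x f_{j,1}(x)$ for each $j \neq i$, this rearranges to $f_{i,1}(\wv_1^i) \leq C_i$. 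Thus $\wv_1^i \in \dom f_{i,2}$ for every $i$, and hence $F_2(\wv_1) = \sum_i f_{i,2}(\wv_1^i) + \delta_\wvs(\wv_1) = \sum_i f_{i,1}(\wv_1^i) = F_1(\wv_1)$.

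Finally, I would conclude exactly as in Proposition \ref{prop: uniform continuous}: since adding indicator functions only increases values, $F_2 \geq F_1$ pointwise, so for any $\ti \wv \in \wvs$,
\begin{align*}
\mathcal{O}_{F_2}(\ti \wv) \geq \mathcal{O}_{F_1}(\ti \wv) \geq \mathcal{O}_{F_1}(\wv_1) = \mathcal{O}_{F_2}(\wv_1),
\end{align*}
so $\wv_1$ is a minimizer of the problem associated to $F_2$. Uniqueness of the minimizer (from the strong convexity of $\mathcal{O}_{F_2}$, just as in Lemma \ref{lem: strong convex}) then yields $\wv_2 = \wv_1$. The only nontrivial step is the coordinate-wise bound $f_{i,1}(\wv_1^i) \leq C_i$, but once the splitting is written out this is an immediate consequence of pushing the other coordinates down to their individual minima.
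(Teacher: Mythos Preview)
Your proposal is correct and follows essentially the same route as the paper: the paper's proof also reduces to showing $f_{i,1}(\wv_1^i) \leq C_i$ via the chain $2\norm{c}_\infty + \min_{\ti \wv} F_1(\ti \wv) \geq F_1(\wv_1) = \sum_j f_{j,1}(\wv_1^j) \geq f_{i,1}(\wv_1^i) + \sum_{j\neq i} \min_x f_{j,1}(x)$, and then defers the rest to Proposition~\ref{prop: uniform continuous}. Your version is simply more explicit, spelling out the properness and uniform continuity of the $f_{i,2}$ and the concluding $\mathcal{O}_{F_2} \geq \mathcal{O}_{F_1}$ argument that the paper leaves implicit.
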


\begin{proof}	
The proof is similar to that of Proposition \ref{prop: uniform continuous}. If $\wv_1$ is the minimizer in the problem associated to $F_1$ then all we need to show is that $f_{i,2}(\wv_1^i) < +\infty$ which is equivalent to showing 
\begin{align*}
f_{i,1}(\wv_1^i) \leq 2 \norm{c}_\infty + \min_{\ti \wv} F_1(\ti \wv) - \sum_{j\neq i} \min_x f_{j,1}(x).
\end{align*}
But this follows because
\begin{align*}
2 \norm{c}_\infty + \min_{\ti \wv} F_1(\ti \wv) \geq F_1(\wv_1) = \sum_j f_{j,1}(\wv_1^j) \geq f_{i,1}(\wv_1^i) + \sum_{j\neq i} \min_x f_{j,1}(x).
\end{align*}
\end{proof}

In the remainder of this section we discuss how to take an arbitrary storage fee function $F$ that splits and ``regularize'' it into a new storage fee function that satisfies the assumptions of Theorem \ref{thm: Newton Convergence}. Recall that we use $d_\H$ to denote Hausdorff distance. We start with a technical lemma.

\begin{lem} \label{lem: hausdorff hypercube}	
	Let $A = \prod_i [a_i,b_i]$ and $B = \prod_i [c_i, d_i]$ be hypercubes in $\R^N$. If $A \cap \wvs, B\cap \wvs \neq \emptyset$ then $d_\H(A \cap \wvs,B \cap \wvs) \leq 4 \sum_i \max(\abs{a_i - c_i}, \abs{b_i - d_i})$.
\end{lem}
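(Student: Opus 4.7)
The plan is to prove the bound symmetrically by showing that for every $x \in A \cap \wvs$ one can construct a $y \in B \cap \wvs$ with $\|x - y\| \leq 2\sum_i \eps_i$, where $\eps_i := \max(\abs{a_i - c_i}, \abs{b_i - d_i})$. Set $\delta := \sum_i \eps_i$. The construction proceeds in two stages: first project onto the hypercube $B$, then adjust back into the simplex. Since $2\delta \leq 4\delta$, this will suffice, and the symmetric argument with $A$ and $B$ exchanged will yield the Hausdorff bound.

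First, I would define $\ti x^i$ to be the Euclidean projection of $x^i$ onto the interval $[c_i, d_i]$, so that $\ti x \in B$ and $|\ti x^i - x^i| \leq \eps_i$ coordinatewise (this uses $x^i \in [a_i, b_i]$). I would then observe that $\ti x^i \geq 0$: indeed, since $B \cap \wvs \neq \emptyset$ we must have $d_i \geq 0$ for every $i$ (otherwise $B$ would contain no nonnegative vector), and combined with $x^i \geq 0$ the projection onto $[c_i, d_i]$ lands in $[\max(c_i, 0), d_i] \subset [0, \infty)$. Set $S := \sum_i \ti x^i$; then $|S - 1| = |\sum_i (\ti x^i - x^i)| \leq \delta$.

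Second, I would adjust $\ti x$ coordinatewise to land in the simplex while staying in $B$ with nonnegative entries. If $S > 1$, decrease coordinates down toward $\max(c_i, 0)$ by a total of $S - 1$; the total available room is $\sum_i(\ti x^i - \max(c_i,0)) = S - \sum_i\max(c_i,0)$. Since any $z \in B \cap \wvs$ satisfies $z^i \geq \max(c_i, 0)$ and $\sum_i z^i = 1$, we get $\sum_i \max(c_i, 0) \leq 1$, so there is enough room. If instead $S < 1$, symmetrically increase coordinates toward $d_i$, with total available room $\sum_i d_i - S \geq 1 - S$, again using the existence of a point in $B \cap \wvs$. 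Call the resulting vector $y$; then $y \in B \cap \wvs$ and $\|y - \ti x\|_1 \leq |S - 1| \leq \delta$.

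Finally, assembling the estimates: $\|\ti x - x\|_2 \leq (\sum_i \eps_i^2)^{1/2} \leq \sum_i \eps_i = \delta$, and $\|y - \ti x\|_2 \leq \|y - \ti x\|_1 \leq \delta$. Hence $\|y - x\|_2 \leq 2\delta \leq 4\delta$, and by symmetry we obtain $d_\H(A \cap \wvs, B \cap \wvs) \leq 4\delta$. The only delicate point is the adjustment step: one must verify that each coordinate has the room needed to compensate for the projection error without violating either the box constraint or nonnegativity. This is precisely where the hypothesis $B \cap \wvs \neq \emptyset$ is used (and symmetrically for $A$), yielding both $\sum_i \max(c_i, 0) \leq 1$ and $\sum_i d_i \geq 1$. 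Everything else is routine.
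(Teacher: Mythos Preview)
Your argument is correct, and in fact yields the sharper bound $d_\H(A\cap\wvs,B\cap\wvs)\le 2\delta$ rather than the $4\delta$ stated in the lemma; the route, however, differs from the paper's. The paper introduces the enclosing box $C=\prod_i[\min(a_i,c_i),\max(b_i,d_i)]$, proves $d_\H(A\cap\wvs,C\cap\wvs)\le 2\delta$ (and symmetrically for $B$), and then applies the triangle inequality, which is where the factor $4$ arises. Because $A\subset C$, only one direction of that Hausdorff estimate is nontrivial, but the paper must still split into the degenerate case $\sum_i a_i=1$ (where $A\cap\wvs$ is a single point) versus the generic case $\sum_i a_i<1<\sum_i b_i$, and in the latter it returns to the simplex via a convex combination $\hat\wv=(1-t)\tilde\wv+ta$ with the corner $a$. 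Your direct project--then--redistribute construction avoids both the auxiliary box and the case split, and by adjusting toward $\max(c_i,0)$ rather than $c_i$ you handle the nonnegativity constraint of $\wvs$ cleanly even when some $c_i<0$. The trade-off is that the paper's use of $C$ makes the two one-sided estimates structurally identical (always projecting from a larger box into a smaller one), whereas you must invoke symmetry in $A$ and $B$ at the end; in exchange you gain a tighter constant and a shorter argument.
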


\begin{proof}	
Define the hypercube $C = \prod_i[\min(a_i, c_i), \max(b_i, d_i)]$. Note that $A, B \subset C$. We will show that $d_\H(A \cap \wvs,C \cap \wvs) \leq 2 \sum_i \max(\abs{a_i - c_i}, \abs{b_i - d_i})$. Once this is done a symmetric argument will give the same bound for $d_\H(B \cap \wvs,C \cap \wvs)$. Our lemma will then follow from the triangle inequality (for Hausdorff distance). 
	
First we handle the case where $\sum_i a_i = 1$. Fix some $\wv \in C \cap \wvs$ and define $\ti \wv$ by $\ti \wv^i = a_i$ so that $\ti \wv \in A \cap \wvs$. Let $S = \{i: \wv^i > a_i \}$ Note that since $\sum_i \wv^i = 1$ we have 
\begin{align*}
\sum_{i \in S} \wv^i = 1 - \sum_{i \not\in S} \wv^i \leq 1 - \sum_{i \not\in S} \min(a_i, c_i) 
\leq 1 - \sum_{i \not\in S} \(a_i - \abs{a_i - c_i} \)
= \sum_{i \in S} a_i + \sum_{i \not\in S} \abs{a_i - c_i}
\end{align*}
and so
\begin{align*}
\norm{\wv-\ti \wv}_2
\leq \norm{\wv-\ti \wv}_1 
= \sum_i \abs{\wv^i - a_i} 
= \sum_{i\in S} (\wv^i - a_i) - \sum_{i\not \in S} (\wv^i - a_i) 
= 2 \sum_{i\in S} (\wv^i - a_i)
\leq 2 \sum_i \abs{a_i - c_i}.
\end{align*}
A similar result holds when $\sum_i b_i = 1$. Hence we may assume that $\sum_i a_i < 1 < \sum_i b_i$. Again choose some $\wv \in C \cap \wvs$ and define $\ti \wv$ by
\begin{align*}
\ti \wv^i =
\begin{cases}
\wv^i, & \text{ if } \wv^i \in [a_i, b_i] \\
a_i & \text{ if } \wv^i < a_i \\
b_i & \text{ if } \wv^i > b_i
\end{cases}
\end{align*}
Assume without loss of generality that $\sum_i \ti \wv^i \geq 1$. Then we define $\ha \wv$ by $\ha \wv^i = (1-t) \ti \wv^i + t a_i$ where $t = \frac{(\sum_i \ti \wv^i) - 1}{\sum_i (\ti \wv^i - a_i)}$. Note that $\ha \wv \in A \cap \wvs$. Furthermore we see that $\norm{\ti \wv - \wv}_1 \leq \sum_i \max (\abs{a_i - c_i}, \abs{b_i - d_i})$ and that
\begin{align*}
\norm{\ha \wv - \ti \wv}_1 
= \sum_i \ti \wv^i - \ha \wv^i
= (\sum_i \ti \wv^i) - 1 
= \sum_i (\ti \wv^i - \wv^i) \leq \norm{\ti \wv - \wv}_1 \leq \sum_i \max (\abs{a_i - c_i}, \abs{b_i - d_i})
\end{align*}
and so $\norm{\ha \wv - \wv}_2 \leq 2 \sum_i \max (\abs{a_i - c_i}, \abs{b_i - d_i})$. In either case we got $d_\H(A \cap \wvs,C \cap \wvs) \leq 2 \sum_i \max(\abs{a_i - c_i}, \abs{b_i - d_i})$ and so the proof follows.
\end{proof}

We are now ready to proceed to the proof of the second main theorem. 

\begin{proof}[Proof of Theorem \ref{thm: regularize}]
We modify $F$ one piece at a time in order to get all of the assumptions satisfied. First, set $F_1 = F$ and $f_{i,1} = f_i$. We define $f_{i,2}$ as in Corollary \ref{cor: uni cont split}. 

Our first task will be to deal with the possibility that some $\dom f_{i,2}$ might be a single point. The only way this is possibly is if $f_{i,2}$ is the indicator function of a point plus a constant, i.e. if $f_{i,2} = \delta_{\{y\}} + K$ for some $K \in \R$. In this case we set $f_{i,3} = \delta_{ [y-\eta, y+\eta] \cap [0,1]} + K$. Otherwise we define $f_{i,3} = f_{i,2}$. 

Now, $f_{i,3}$ is a convex function with bounded domain and by construction it is bounded on its domain. Hence its domain is a closed interval, say $\dom f_{i,3} = [a_i, b_i]$. Since $\dom f_{i,3}$ is not a single point we have $a_i < b_i$. Now in order to define $f_{i,4}$ we split into two cases. First we consider the case where $\sum_i a_i < 1 < \sum_i b_i$. Choose some $\eps> 0$ so that $\eps < \min(\frac{1- \sum_i a_i}{2N}, \eta)$ and $\eps < \min_i {b_i}$. Define $f_{i,4}$ by $f_{i,4} = f_{i,3} + \delta_{[\eps, 1]}$. Note that now if we let $c_i, d_i$ be so that $\dom f_{i,4} = [c_i, d_i]$, then $d_i = b_i$, and $c_i = \max(a_i, \eps)$. In particular $c_i \geq \eps > 0$ and 
\begin{align*}
\sum_i c_i \leq \sum_i (a_i + \eps) \leq \(\sum_i a_i\) + \frac{1-\sum_i a_i}{2} = \frac{1 + \sum_i a_i }{2} < 1.
\end{align*}
Next we deal with the case where $\sum_i a_i = 1$ or $\sum_i b_i = 1$. Without loss of generality assume that $\sum_i a_i = 1$. In this case we define $f_{i,4} = \delta_{[c_i, d_i]}$ where $c_i = \frac{a_i + \frac{\eta}{N}}{1+2\eta}$ and $d_i = a_i + \frac{\eta}{N}$. We note that $\min_i c_i \geq \frac{{\eta}}{N+2\eta N}$ and 
\begin{align*}
\sum_i c_i = \frac{1}{1+2\eta} \sum_i d_i < \sum_i d_i.
\end{align*}
Since the $f_{i,4}$ are uniformly continuous convex functions on a bounded interval, there are convex functions $f_{i,5}$ so that $\dom f_{i,5} = \dom f_{i,4}$, $\norm{f_{i,4} - f_{i,5}}_{L^\infty(\dom f_{i,4})} < \eta$ and $f_{i,5}$ are smooth on the interior of their domains. One can construct these $f_{i,5}$ by first taking a polygonal approximate of the $f_{i,4}$ and then smoothing it (see \cite[Theorem 2]{Koliha03}).  

Recall that $\dom f_{i,5} = \dom f_{i,4} = [c_i, d_i]$. We now define $f_{i,6}(x) = f_{i,5}(x) - \eta \sqrt{(d_i-x)(x-c_i)}$. The AM-GM inequality shows that $\norm{f_{i,5} - f_{i,6}}_{L^\infty([c_i, d_i])} \leq \frac{\eta}{2} (d_i - c_i)$ and we see that $f_{i,6}$ is strongly convex with parameter $\eta\frac{2}{d_i - c_i}$. Furthermore, it is clear that $f_{i,6}$ is essentially smooth, and smooth on the interior of its domain. In particular $f_{i,6}'''$ is locally bounded on the interior of $\dom f_{i,6}$. Furthermore $\dom f_i = [c_i, d_i]$ and we saw above that $\min c_i > 0$ and $\sum_i c_i < 1 < \sum d_i$. 

Now we set $F_j = \sum_i f_{i,j}(\wv^i) + \delta_\wvs(\wv)$ for $j \in \{2,3,4,5, 6\}$. $F_6$ is the promised $\ti F$, i.e. we see from the above paragraph that $F_6$ satisfies the assumptions of Theorem \ref{thm: Newton Convergence}. All that is left is to prove that the minimizer of the problem associated to $F_6$ is close to that to $F_1$. 

Let $\wv_i$ be the minimizers associated to the $F_i$. We have $\wv_1 = \wv_2$ by Corollary \ref{cor: uni cont split}. Note that by Lemma \ref{lem: hausdorff hypercube} we have that $d_\H(\dom F_2, \dom F_3) \leq 4N \eta$ and so by Proposition \ref{prop: hausdorff stab} we get that $\norm{\wv_2 - \wv_3}^2 \leq 64C_LN^{5/2}\norm{c}_{\infty} \eta +  8 C_L N\omega(4N\eta)$ where $\omega$ is the modulus of continuity of $F_2$. 

For controlling $\norm{\wv_3-\wv_4}$, recall that we split into two separate cases. First we look at the case where we had $\sum_i a_i < 1 < \sum_i b_i$. In this case by Lemma \ref{lem: hausdorff hypercube} we have that $d_\H(\dom F_3, \dom F_4) \leq 4N \eps \leq 4N \eta$ and so we get again that $\norm{\wv_3 - \wv_4}^2 \leq 64C_LN^{5/2}\norm{c}_{\infty} \eta +  8 C_L N\omega(4N\eta)$. Note that we have used that $\omega$ is also the modulus of continuity of $F_3$.
 Now if we are in the other case, i.e. $\sum_i a_i = 1$ then note that $\dom F_3 = \{a\}$ where $a = (a_1, \dots, a_N) \in \R^N$. Hence we can apply Lemma \ref{lem: hausdorff hypercube} with $A = \{a\}$ and $B = \prod_i [c_i, d_i]$ to get $d_\H(\dom F_3, \dom F_4) \leq 4 \eta$ and so $\norm{\wv_3 - \wv_4}^2 \leq 64C_LN^{3/2}\norm{c}_{\infty} \eta +  8 C_L N\omega(4\eta)$.

Finally by Proposition \ref{prop: uniform stab} that $\norm{\wv_4-\wv_6} \leq 4 \sqrt{C_L N \sum_{i}(\eta + \eta\frac{d_i - c_i}{2})}$. Since $\omega$ depends only on the initial $F$ (and not on $\eta$) and $\abs{d_i - c_i} \leq 1$ (and so is also independent of $\eta$) we get the desired result. 	
\end{proof}

\section{Acknowledgments}
I would like to thank Kitagawa for helpful comments and suggestions on a previous version of this manuscript.

\begin{appendix}
	
\section {Bounds on $\psi's$}\label{sec: Appendix Bounds on psi's}

In this section we give prove a lemma bounding the difference between different coordinates of a $\psi$ that generates a Laguerre diagram where each cell has positive mass. 

\begin{lem}\label{lem: psi bound}
	If $G(\psi)^j > 0$ then for all $k$ we have $\psi^j - \psi^k \leq  2 \norm{c}_{\infty}$. In particular if $\psi \in \mathcal{K}^0$ the for all $j,k$ we have $\abs{\psi^j - \psi^k} \leq 2 \norm{c}_{\infty}$.
\end{lem}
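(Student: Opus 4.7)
The plan is to unpack the definition of the Laguerre cell and exploit the fact that $G(\psi)^j > 0$ guarantees $\Lag_j(\psi) \neq \emptyset$. Concretely, since $\mu(\Lag_j(\psi)) = G(\psi)^j > 0$, we can pick any $x \in \Lag_j(\psi)$. By definition of the Laguerre cell, for every index $k$ we then have the pointwise inequality
\begin{align*}
c(x, y_j) + \psi^j \leq c(x, y_k) + \psi^k.
\end{align*}

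Rearranging gives $\psi^j - \psi^k \leq c(x, y_k) - c(x, y_j)$, and bounding the right-hand side crudely by $2\|c\|_\infty$ yields the first claim. This is really the entire content of the lemma, so there is no serious obstacle --- the compactness of $X$ and continuity of $c$ that we have assumed throughout guarantee that $\|c\|_\infty$ is finite and the supremum is attained, which is all that is needed.

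For the ``in particular'' part, if $\psi \in \mathcal{K}^0$ then by definition $G^i(\psi) > 0$ for every $i \in \{1, \dots, N\}$. Applying the first bound with the roles of $j$ and $k$ swapped gives $\psi^k - \psi^j \leq 2\|c\|_\infty$ as well, and combining the two one-sided bounds yields $|\psi^j - \psi^k| \leq 2\|c\|_\infty$ for all $j, k$, as claimed.
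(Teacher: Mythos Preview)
Your proof is correct and follows essentially the same approach as the paper: both extract the inequality $c(x,y_j)+\psi^j \leq c(x,y_k)+\psi^k$ from membership in $\Lag_j(\psi)$ and bound $c(x,y_k)-c(x,y_j)$ by $2\|c\|_\infty$. The only cosmetic difference is that the paper phrases it as a proof by contradiction (assuming $\psi^j-\psi^k > 2\|c\|_\infty$ and concluding $\Lag_j(\psi)=\emptyset$), whereas you argue directly.
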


\begin{proof}	
	Suppose for sake of contradiction that $\psi^j - \psi^k >  2 \norm{c}_{\infty}$. Then	
	\begin{align*}
	\Lag_j(\psi) 
	&= \{x\in X\mid c(x, y_j)+\psi^j= \min_{i} c(x,y_i) + \psi^i \} \\
	&\subset \{x\in X\mid c(x, y_j)+\psi^j \leq c(x,y_{k}) + \psi^{k} \} \\
	&= \{x\in X\mid \psi^j - \psi^k \leq c(x,y_{k}) - c(x, y_j) \} \\
	&\subset \{x\in X\mid \psi^j - \psi^k \leq 2 \norm{c}_{\infty} \} 
	= \emptyset
	\end{align*}	
	and so $G(\psi)^j = 0$ which is a contradiction.
\end{proof}

\end{appendix}

\bibliographystyle{alpha}
\bibliography{snowshovelingalg}

\def\cprime{$'$}
\begin{thebibliography}{MTW05}

\bibitem[BK19]{BansilKitagawa19a}
Mohit {Bansil} and Jun {Kitagawa}.
\newblock {An optimal transport problem with storage fees}.
\newblock {\em arXiv e-prints}, page arXiv:1905.01249, May 2019.

\bibitem[BK20a]{BansilKitagawa20b}
Mohit Bansil and Jun Kitagawa.
\newblock A newton algorithm for semi-discrete optimal transport with storage
  fees.
\newblock 2020.

\bibitem[BK20b]{BansilKitagawa20a}
Mohit Bansil and Jun Kitagawa.
\newblock Quantitative stability in the geometry of semi-discrete optimal
  transport.
\newblock 2020.

\bibitem[CJP09]{Crippa2009}
Gianluca Crippa, Chlo\'{e} Jimenez, and Aldo Pratelli.
\newblock Optimum and equilibrium in a transport problem with queue
  penalization effect.
\newblock {\em Adv. Calc. Var.}, 2(3):207--246, 2009.

\bibitem[KMT19]{KitagawaMerigotThibert19}
Jun {Kitagawa}, Quentin {M{\'e}rigot}, and Boris {Thibert}.
\newblock {Convergence of a Newton algorithm for semi-discrete optimal
  transport}.
\newblock {\em J. Eur. Math. Soc. (JEMS)}, Advance online publication, doi:
  10.4171/JEMS/889, 2019.

\bibitem[Kol04]{Koliha03}
J.~J. Koliha.
\newblock Approximation of convex functions.
\newblock {\em Real Anal. Exchange}, 29(1):465--471, 2003/04.

\bibitem[L{\'e}v15]{Levy15}
Bruno L{\'e}vy.
\newblock A numerical algorithm for {$L_2$} semi-discrete optimal transport in
  3{D}.
\newblock {\em ESAIM Math. Model. Numer. Anal.}, 49(6):1693--1715, 2015.

\bibitem[Loe09]{Loeper09}
Gr{\'e}goire Loeper.
\newblock On the regularity of solutions of optimal transportation problems.
\newblock {\em Acta Math.}, 202(2):241--283, 2009.

\bibitem[M{\'e}r11]{Merigot11}
Quentin M{\'e}rigot.
\newblock A multiscale approach to optimal transport.
\newblock {\em Computer Graphics Forum}, 30(5):1583--1592, 2011.

\bibitem[Mir15]{Mirebeau15}
Jean-Marie Mirebeau.
\newblock Discretization of the 3{D} {M}onge-{A}mpere operator, between wide
  stencils and power diagrams.
\newblock {\em ESAIM Math. Model. Numer. Anal.}, 49(5):1511--1523, 2015.

\bibitem[MMT18]{Merigot2018}
Quentin M\'{e}rigot, Jocelyn Meyron, and Boris Thibert.
\newblock An algorithm for optimal transport between a simplex soup and a point
  cloud.
\newblock {\em SIAM J. Imaging Sci.}, 11(2):1363--1389, 2018.

\bibitem[MTW05]{MaTrudingerWang05}
Xi-Nan Ma, Neil~S. Trudinger, and Xu-Jia Wang.
\newblock Regularity of potential functions of the optimal transportation
  problem.
\newblock {\em Arch. Ration. Mech. Anal.}, 177(2):151--183, 2005.

\bibitem[Nes18]{Nesterov2018}
Yurii Nesterov.
\newblock {\em Lectures on convex optimization}, volume 137 of {\em Springer
  Optimization and Its Applications}.
\newblock Springer, Cham, 2018.
\newblock Second edition of [ MR2142598].

\bibitem[OP88]{OlikerPrussner88}
Vladimir Oliker and Laird Prussner.
\newblock On the numerical solution of the equation {$(\partial^2z/\partial
  x^2)(\partial^2z/\partial y^2)-((\partial^2z/\partial x\partial y))^2=f$} and
  its discretizations{,} {I}.
\newblock {\em Numer. Math.}, 54(3):271--293, 1988.

\bibitem[Roc70]{Rockafellar70}
R.~Tyrrell Rockafellar.
\newblock {\em Convex analysis}.
\newblock Princeton University Press, 1970.

\bibitem[San15]{Santambrogio15}
Filippo Santambrogio.
\newblock {\em Optimal transport for applied mathematicians}, volume~87 of {\em
  Progress in Nonlinear Differential Equations and their Applications}.
\newblock Birkh\"{a}user/Springer, Cham, 2015.
\newblock Calculus of variations, PDEs, and modeling.

\bibitem[TW09]{TrudingerWang09}
Neil~S. Trudinger and Xu-Jia Wang.
\newblock On the second boundary value problem for {M}onge-{A}mp\`ere type
  equations and optimal transportation.
\newblock {\em Ann. Sc. Norm. Super. Pisa Cl. Sci. (5)}, 8(1):143--174, 2009.

\bibitem[Vil09]{Villani09}
C{\'e}dric Villani.
\newblock {\em Optimal transport: Old and new}, volume 338 of {\em Grundlehren
  der Mathematischen Wissenschaften [Fundamental Principles of Mathematical
  Sciences]}.
\newblock Springer-Verlag, Berlin, 2009.

\end{thebibliography}

\end{document}